\newcommand{\set}[1]{\left\{#1\right\}}
\newcommand{\dual}[1]{\langle#1\rangle}
\newcommand{\abs}[1]{\lvert#1\rvert}
\newcommand{\norm}[1]{\lVert#1\rVert}
\newcommand{\trinorm}[1]{{\left\vert\kern-0.25ex\left\vert\kern-0.25ex\left\vert #1 
    \right\vert\kern-0.25ex\right\vert\kern-0.25ex\right\vert}}
\newcommand{\jump}[1]{\llbracket #1 \rrbracket}
\newcommand{\mean}[1]{\{#1\}}
\newcommand{\disp}{\displaystyle}
\newcommand{\nder}[1]{\disp\frac{\partial#1}{\partial \n}}
\newcommand{\n}{\boldsymbol{n}}
\newcommand{\x}{\boldsymbol{x}}
\newcommand{\y}{\boldsymbol{y}}
\newcommand{\bu}{\boldsymbol{u}}
\newcommand{\bv}{\boldsymbol{v}}
\newcommand{\bj}{\boldsymbol{j}}
\newcommand{\bH}{\boldsymbol{h}}
\newcommand{\bE}{\boldsymbol{e}}
\newcommand{\curl}{\mathbf{curl}}
\renewcommand{\div}{\mathbf{div}}
\newcommand{\bPi}{\boldsymbol{\Pi}}
\newcommand{\cT}{\mathcal{T}}
\newcommand{\cF}{\mathcal{F}}
\newcommand{\cP}{\mathcal{P}}
\newcommand{\rmH}{\mathrm{H}}
\newcommand{\rmL}{\mathrm{L}}
\newtheorem{remark}{Remark}[section]
\newtheorem{lemma}{Lemma}[section]
\newtheorem{theorem}{Theorem}[section]
\newtheorem{prop}{Proposition}[section]
\date{}
\title{Coupling DG-FEM  and BEM for time harmonic eddy current problem
\thanks{Partial support by the University of Trento, and
Spain's Ministry of Economy through Project MTM2013-43671-P.
}}
\author{{\sc Ana Alonso Rodr\'iguez}\thanks{
Department of Mathematics, University of Trento,
Trento, Italy,
e-mail: {\tt alonso@science.unitn.it}}, $\,\,$
{\sc Salim Meddahi}\thanks{Departamento de Matem\'aticas, Facultad de Ciencias,
Universidad de Oviedo, Calvo Sotelo s/n, Oviedo, Espa\~na,
e-mail: {\tt salim@uniovi.es}}\\ and \\
{\sc Alberto Valli}\thanks{Department of Mathematics, University of Trento,
	Trento, Italy, e-mail: {\tt
valli@science.unitn.it}}
}
\begin{document}

\maketitle

\begin{abstract}
We introduce and analyze a discontinuous Galerkin FEM/BEM method for a time-harmonic eddy current  problem written in terms of the magnetic field. We use nonconforming N\'ed\'elec finite elements on a partition of the interior domain coupled with continuous boundary elements on the transmission interface. We prove quasi-optimal error estimates in the energy norm.
\end{abstract}

\section{Introduction}
\label{sec:1}
The idea of coupling finite elements (FEM) and boundary elements (BEM) to solve eddy current problems has been introduced in \cite{Bossavit91} by Bossavit and more recently in \cite{Hiptmair, MS03}. Our aim here is to revisit the FEM/BEM formulation  given in \cite{MS03} in order to provide an interior penalty discontinuous Galerkin (IPDG) approximation of the magnetic field in the interior domain.

Discontinuos Galerkin (DG) methods can provide efficient solvers for electromagnetic problems posed in complex geometries and requiring $hp$ adaptivity, see \cite{Carstensen}. However,  we only found few works applying DG methods to eddy current problems (see \cite{PS03} for the time-harmonic regime and  \cite{ABP09} for a time-domain problem) and we are not aware about any DG-FEM/BEM formulation for this problem. 
 
Due to the nonlocal character of the boundary integral operators, continuous  Galerkin approximations  are usually used on the boundary. As a consequence, the major difficulty that is encountered in the design of a DG-FEM/BEM method (cf. \cite{GHS10, HMS16, Cockburn} and \cite[Section 4]{Daveau}) is the mismatch that occurs between the interior and the boundary unknowns on the transmission interface. In our case, this difficulty manifests itself in the transmission condition \eqref{ModelProblemFemBEM3} where we have two variables of different nature. From the one side (as the discrete variable representing $\psi$ is $\rmH^1(\Gamma)$-conforming) we have a globally surface-divergence free function, from the other hand, the tangential trace of the DG approximation of the magnetic field is not $\rmH(\text{div}_\Gamma)$-conforming. This impedes one to merge the two variables at the discrete level as in \cite{MS03}. To address this problem, we exploit the ability of DG-methods to incorporate essential boundary conditions into the variational formulation and impose \eqref{ModelProblemFemBEM3} weakly. As a result, in comparison with \cite{MS03}, we have one further independent unknown on the boundary. We show that the resulting IPDG-FEM/BEM is uniformly stable with respect to the mesh parameter in an adequate DG-norm. Moreover, under suitable regularity assumptions, we provide quasi-optimal asymptotic error estimates.

We end this section with some of the notations that we will  use below.
Given a real number $r\geq 0$ and a polyhedron $\mathcal O\subset \mathbb R^d$, $(d=2,3)$,
we denote the norms and seminorms of the usual Sobolev space
$\rmH^r(\mathcal O)$ by $\|\cdot \|_{r,\mathcal O}$ and $|\cdot|_{r,\mathcal O}$ respectively. 
We use the convention $\rmL^2(\mathcal O):= \rmH^0(\mathcal O)$.
We recall that, for any $t \in [-1,\: 1 ]$, the spaces $\rmH^{t}(\partial \mathcal O)$
have an intrinsic definition (by localization) on the Lipschitz surface $\partial \mathcal O$
due to their invariance under Lipschitz coordinate transformations. Moreover, for all $0< t\leq 1$,
$\rmH^{-t}(\partial\mathcal O)$ is the dual of $\rmH^{t}(\partial\mathcal O)$ with respect
to the pivot space $\rmL^2(\partial \mathcal{O})$.  Finally we consider $\mathbf{H}(\curl, \mathcal O):=\{ \bv \in \rmL^2(\mathcal O)^3 \, : \, \curl \bv \in \rmL^2(\mathcal O)^3\}$ and endow it with its usual Hilbertian norm $\norm{\bv}_{\mathbf{H}(\curl, \mathcal O)}^2:=
\norm{\bv}_{0, \mathcal O}^2 + \norm{\curl \bv}_{0, \mathcal O}^2$.

\section{The model problem}\label{s2}
Let $\Omega\subset \mathbb{R}^3$ be a bounded polyhedral domain with a Lipschitz boundary $\Gamma$.
We denote by $\mathbf{n}$ the unit normal vector on $\Gamma$ that points towards
$\Omega^e:= \mathbb{R}^3\setminus \overline \Omega$.
For the sake of simplicity, we assume that $\Omega$ is simply connected and that $\Gamma$ is connected.
We consider the eddy current problem
\begin{equation}\label{ModelProblem}
\begin{array}{rcll}
\imath \omega \mu \bH + \curl\, \bE  &=& \boldsymbol 0         &\text{in $\Omega$}\\
 \bE  &=& \sigma^{-1} ( \curl\, \bH - \bj_e )         &\text{in $\Omega$}\\
        \bH \times \n &=& \nabla p \times \n &\text{on $\Gamma$}\\
        \mu \bH \cdot \n &=& \mu_0 \nder{p} &\text{on $\Gamma$}\\
        - \Delta p &=& 0 &\text{in $\Omega^e$}\\
        p &=& O\big(1/\abs{\x}\big)  &\text{as $\abs{\x}\to \infty$},
\end{array}
\end{equation}
where $\omega>0$ is the angular frequency, $\mu_0$ is the magnetic permeability of the free space, and the conductivity $\sigma$ and the magnetic permeability $\mu$ in the conductor $\Omega$ are positive and piecewise constant functions with respect to a partition of the domain $\Omega$ into Lipschitz polyhedra. 
Here $\bj_e$ denotes the (complex valued) applied current density, $\bE$ and $\bH$ are the electric field and the magnetic field respectively and $p$ is the scalar magnetic potential in the exterior region $\Omega_e$, namely, $\bH=\nabla p$ in $\Omega_e$.

A finite element formulation of problem \eqref{ModelProblem} requires the approximation of the asymptotic condition on $p$ at infinity by an homogeneous Dirichlet boundary condition on an artificial boundary $\Sigma$ located sufficiently far from the conductor $\Omega$. A more accurate strategy for solving problem \eqref{ModelProblem} consists  in reducing the computational domain to the conductor $\Omega$. This can be achieved by considering non-local boundary conditions provided by the following integral equations relating
the Cauchy data $\lambda := \nder{p}\quad \text{and} \quad \psi := p|_\Gamma$ on $\Gamma$ (see, e.g., \cite[Chap. 3]{sauterSchwab}):
\begin{align}
   \label{inteq1}
   \psi &= \left( \tfrac{1}{2}I + K\right) \psi - V \lambda \\
   \label{inteq2}
   \lambda &= -W\psi + \left(\tfrac{1}{2}I - K^{\texttt{t}}\right) \lambda
\end{align}
where $V$, $K$, $K^{\texttt{t}}$ are the boundary integral operators representing the single, double and adjoint of the
double layer, respectively, and $W$ is the hypersingular operator. This yields to an exact formulation of problem \eqref{ModelProblem}
that is adequate for a coupled FEM-BEM discretization strategy as: 
\begin{align}
 \label{ModelProblemFemBEM1}\imath \omega \mu \bH + \curl\, \left( \sigma^{-1} ( \curl\, \bH - \bj_e ) \right)  &= \boldsymbol 0         \qquad\text{in $\Omega$}\\[1ex]
         \label{ModelProblemFemBEM3}\bH \times \n &= \curl_\Gamma \psi \qquad\text{on $\Gamma$}\\[1ex]
        \label{ModelProblemFemBEM4} \frac{\mu}{\mu_0} \bH \cdot \n &= -W\psi + \left(\tfrac{1}{2}I - K^{\texttt{t}}\right) \lambda \qquad\text{on $\Gamma$}\\[1ex]
      \label{ModelProblemFemBEM5}V \lambda +    \left( \tfrac{1}{2}I - K\right) \psi &=  0  \qquad\text{on $\Gamma$},
\end{align}
where $\curl_\Gamma$ is the curl operator on the surface $\Gamma$. 

In \cite{MS03} it is shown that, using \eqref{ModelProblemFemBEM3}, the unknown $\psi$ can be eliminated from \eqref{ModelProblemFemBEM4} and \eqref{ModelProblemFemBEM5}, and that 
the weak formulation of the reduced problem admits a unique solution $(\bH, \lambda)\in \mathbf{H}(\curl, \Omega)\times \rmH_0^{-1/2}(\Gamma)$, where $\rmH^{-1/2}_0(\Gamma):= \set{\eta\in \rmH^{-1/2}(\Gamma); \, \, \dual{\eta, 1}_\Gamma=0}$. Here, $\dual{\cdot, \cdot}_\Gamma$ stands for the duality pairing between $\rmH^{-1/2}(\Gamma)$ and $\rmH^{1/2}(\Gamma)$.
Then $\psi\in \rmH^{1/2}(\Gamma)$ is uniquely determined, up to an additive constant, from  \eqref{ModelProblemFemBEM3}, so it is unique in $\rmH^{1/2}_0(\Gamma):= \set{\varphi \in \rmH^{1/2}(\Gamma); \int_\Gamma \varphi =0}$. 

Once the Cauchy data $\lambda$ and $\psi$ are known, the solution is computed in the exterior domain $\Omega^e$
by using the integral representation formula
\begin{equation*}\label{IntegralRep}
p(\x) = \int_{\Gamma} \disp\frac{\partial E(\abs{\x-\y})}{\partial \n_{\y}}\, \psi(\y)\, ds_{\y} -
\int_{\Gamma} E(\abs{\x-\y}) \lambda(\y)\, ds_{\y}  \quad\text{in $\Omega^e$},
\end{equation*}
where $E(\abs{\x}):= \frac{1}{4\pi}\frac{ 1}{\abs{\x}}$
is the fundamental solution of the Laplace operator. Let us recall some important properties of the boundary
integral operators, see \cite{sauterSchwab} for details.
They are formally defined at almost every point $\x \in \Gamma$ by
\[
 V\xi (\x):= \int_\Gamma E(\abs{\x-\y}) \xi(\y)\, ds_{\y}, \qquad K\varphi (\x):= \int_\Gamma
 \disp\frac{\partial E(\abs{\x-\y})}{\partial \n_{\y}}\, \varphi(\y)\, ds_{\y},
\]
\[
 K^{\texttt{t}}\xi (\x):= \int_\Gamma
 \disp\frac{\partial E(\abs{\x-\y})}{\partial \n_{\x}}\, \xi(\y)\, ds_{\y}, \quad
 W \varphi (\x) := - \disp\frac{\partial}{\partial \n_{\x}} \int_\Gamma
 \disp\frac{\partial E(\abs{\x-\y})}{\partial \n_{\y}}\, \varphi(\y)\, ds_{\y}.
\]
They are bounded as mappings $V:\, \rmH^{-1/2}(\Gamma)\to \rmH^{1/2}(\Gamma)$,
$K:\, \rmH^{1/2}(\Gamma) \to \rmH^{1/2}(\Gamma)$ and $W:\, \rmH^{1/2}\to \rmH^{-1/2}(\Gamma)$. 
Moreover, there exist constants $C_V>0$ and $C_W>0$ such that
\begin{equation}\label{eq-coercive-1}
 \dual{\bar \chi, V\chi}_\Gamma \,\ge\, C_V\, \norm{\chi}_{-1/2,\Gamma}^2\quad
 \forall\, \chi \,\in\,   \rmH^{-1/2}(\Gamma)
\end{equation}
and
 \begin{equation}\label{eq-coercive-2}
\dual{W \varphi, \bar\varphi}_\Gamma \,+\, \left|\int_\Gamma \varphi \, \right|^2
\,\ge\, C_W\,\norm{\varphi}_{1/2,\Gamma}^2\quad
 \forall\,\varphi \,\in\, \rmH^{1/2}(\Gamma).
 \end{equation}

\section{The DG-FEM/BEM formulation}\label{section5}

We consider a sequence  $\{\cT_h\}_h$ of conforming  and shape-regular triangulations 
of  $\overline \Omega$. 
We assume that each partition $\cT_h$ consists of tetrahedra $K$ of diameter $h_K$ and unit outward normal to $\partial K$ denoted $\n_K$. 
We also assume that the meshes $\{\cT_h\}_h$ are aligned with the discontinuities of the piecewise constant coefficients $\sigma$ 
and $\mu$. The parameter $h:= \max_{K\in \cT_h} \{h_K\}$ represents the mesh size.

We denote by $\cF_h$ the set of faces of the tetrahedra of the mesh, by $\cF_h^0$ the sets of interior faces 
and by $\cF_h^\Gamma$ the set of boundary faces.
Clearly $\cF_h:=\cF_h^0 \cup\cF_h^\Gamma$. 
We notice that $\set{\cF_h^\Gamma}_h$ is a shape-regular family of triangulations of $\Gamma$ into triangles $T$ of diameter $h_T$; therefore from now on we will denote by $T$ the faces on $\Gamma$.

Let $\mathcal{O}_h$ be either $\mathcal T_h$ or $\mathcal F_h^\Gamma$ and $E$ be a generic element of $\mathcal O_h$. We introduce for any  $s\geq 0$ the broken Sobolev spaces
\[
 \rmH^s(\mathcal{O}_h) := \prod_{E\in \mathcal{O}_h} \rmH^s(E)\quad \text{and} \quad  \mathbf{H}^s(\mathcal{O}_h) := \prod_{E\in \mathcal{O}_h} \rmH^s(E)^3\, .
 \]
For each $w:= \set{w_E}\in \rmH^s(\mathcal{O}_h)$, the components  $w_E$ 
represents the restriction $w|_E$. When no confusion arises, the restrictions will be written
without any subscript.
The space 
$\rmH^s(\mathcal{O}_h)$ is endowed with the Hilbertian norm
\[
\norm{w}_{s,\mathcal{O}_h}^2 := \sum_{E\in \mathcal{O}_h} \norm{w_E}^2_{s,E}.
\]
We consider identical definitions for the norm and the seminorm of the vectorial version $\mathbf{H}^s(\mathcal{O}_h)$.
We use the standard conventions $\rmL^2(\mathcal{O}_h):=\rmH^0(\mathcal{O}_h)$ and 
$\mathbf{L}^2(\mathcal{O}_h):=\mathbf{H}^0(\mathcal{O}_h)$ and introduce the  bilinear forms
\[
(w, z)_{\mathcal{O}_h} = \sum_{E\in \mathcal{O}_h} \int_E w_E z_E, 
\quad
\text{and}
\quad
(\mathbf{w}, \mathbf{z})_{\mathcal{O}_h} = \sum_{E\in \mathcal{O}_h} \int_E \mathbf{w}_E\cdot \mathbf{z}_E.
\]

Hereafter, given an integer $k\geq 0$ and a domain $D\subset \mathbb{R}^3$, $\cP_k(D)$ denotes the space of polynomials of degree at most $k$ on $D$. Let $h_\cF\in \prod_{F\in \mathcal{F}_h} \cP_0(F)$ be defined by 
$h_\cF|_F := h_F$ $, \forall F \in \mathcal{F}_h$, where $h_F$ represents the diameter of the face $F$. 
We also introduce $\mathtt{s}_\cF\in \prod_{F\in \mathcal{F}_h} \cP_0(F)$ defined by 
$\mathtt{s}_F := \min(\sigma|_K, \sigma|_{K'})$, if $F = \partial K \cap \partial K'\in \cF_h^0$ and  
$\mathtt{s}_F := \sigma|_K$,  if $F = \partial K \cap \Gamma\in \cF_h^\Gamma$.


We introduce, for $m\geq 1$, the finite element spaces
\[
 \mathbf{X}_h := \prod_{K\in \cT_h}\cP_m(K)^3\, , \quad  \Lambda_h := \set{\lambda\in  \prod_{T\in \cT_h^\Gamma}\cP_{m-1}(T); \quad \int_\Gamma \lambda = 0 }
 \]
and
\[
 \Psi_h := \set{\phi \in \mathcal C^0(\Gamma); \,\, \phi|_T \in \cP_{m+1}(T)\,\, \forall T\in \cF_h^\Gamma, \,\, \int_{\Gamma} \phi = 0}.
\]

Given $\bv\in \mathbf{H}^{1+s}(\cT_h)$, with $s>1/2$, we consider 
  $\curl_h\bv\in \mathbf{H}^s(\cT_h)$ given by 
$
 (\curl_h \bv)|_K = \curl\, \bv_K$, for all $K\in \cT_h$ and introduce
\[
\mathbf{H}^s(\curl, \cT_h) := \set{\bv \in \mathbf{H}^s(\cT_h);\quad \curl_h \bv \in \mathbf{H}^{s}(\cT_h)}.
\]

For $(\bv,\varphi) \in \mathbf{H}^s(\cT_h)\times \rmH^{1}(\cF_h^\Gamma)$, $s>1/2$, 
we introduce the jumps
$\jump{(\bv,\varphi)}$ by
\[
               \jump{(\bv,\varphi)}: = \begin{cases}
                \jump{\bv\times \n}_F :=\bv_K \times \n_K + \bv_{K'}\times \n_{K'} & \text{if $F=K\cap K'\in \cF_h^0(\Omega)$}\\
                \bv|_T\times \n - \curl_T\varphi & \text{if $T\in \cF_h^{\Gamma}$}
               \end{cases}
\]
and the averages $\mean{\bv}\in \mathbf{L}^2(\cF_h)$ by
\[
\mean{\bv} =
\begin{cases}
(\bv_K + \bv_{K'})/2 & \text{if $F=K\cap K'\in \cF_h^0$}\\
\bv_K & \text{if $T\subset \partial K \in \cF_h^\Gamma$}
\end{cases}.
\]

In order to derive the DG-FEM/BEM discretization  of \eqref{ModelProblemFemBEM1}-\eqref{ModelProblemFemBEM5} we 
assume that $\bH\in \mathbf{H}(\curl,\Omega)\cap  \mathbf{H}^s(\curl, \cT_h)$ and $\bj_e\in \mathbf{H}^s(\cT_h)$ with 
$s>1/2$. We test
\eqref{ModelProblemFemBEM4} with  $\varphi \in \rmH^{1/2}(\Gamma)\cap \rmH^{1}(\cF_h^\Gamma)$ 
\[
\left\langle -W\psi + \left(\tfrac{1}{2}I - K^{\texttt{t}}\right) \lambda, \varphi \right \rangle_\Gamma = \left\langle\frac{\mu}{\mu_0} \bH \cdot \n, \varphi \right\rangle_\Gamma,
\]
and use \eqref{ModelProblemFemBEM1} together with an integration by parts on $\Gamma$ to obtain 
\begin{equation}\label{inteqq1}
\left\langle -W\psi + \left(\tfrac{1}{2}I - K^{\texttt{t}}\right) \lambda, \varphi \right \rangle_\Gamma
=
 \frac{-1}{\i\omega\mu_0}\int_\Gamma \curl\, \bE \cdot \n \, \varphi 
 =\frac{-1}{\i\omega\mu_0}\int_\Gamma \bE\cdot \curl_\Gamma \varphi,
\end{equation}
where, for economy of notations,  we reintroduced here the electric field $\bE:=\sigma^{-1} (\curl\, \bH - \bj_e)$. 
Moreover, we deduce from \eqref{ModelProblemFemBEM1} that, 
for all $\bv \in \mathbf{H}^s(\curl, \mathcal T_h)$,  
\begin{equation}\label{aaa1}
\sum_{K\in \cT_h} \Big( \int_K( \i \omega \mu \bH \cdot \bv + \bE \cdot \curl\, \bv )+ \int_{\partial K} \bE\cdot \bv \times \n_K \Big)=0 ,
\end{equation}
We also obtain from \eqref{ModelProblemFemBEM1}  that $\curl\, \bE \in \rmL^2(\Omega)^3$. Consequently, the jumps of the tangential components of $\bE \in \mathbf{H}^s(\cT_h) \cap \mathbf{H}(\curl, \Omega)$ vanish across the internal faces $F\in \cF^0_h$ and
\begin{equation*}
\sum_{K\in \cT_h} \int_{\partial K} \bE\cdot \bv \times \n_K  = 
\sum_{F\in \cF^0_h} \int_{F} \mean{\bE}\cdot \jump{\bv\times \n}
+\sum_{T \in \mathcal F_h^\Gamma} 	\int_{T} \bE\cdot \bv \times \n.
\end{equation*}
Inserting this identity in \eqref{aaa1} and adding the resulting equation to \eqref{inteqq1}, since for $T\in \cF_h^{\Gamma}$ one has $\jump{(\bv,\varphi)}_{|T}=\bv|_T\times \n - \curl_T\varphi $ we easily get
\begin{multline}\label{bbb1}
(\i \omega \mu \bH, \bv)_{\cT_h} + (\bE, \curl_h \bv)_{\cT_h} +
\dual{\mean{\bE}, \jump{(\bv,\varphi)}}_{\cF_h}\\
+ \i \omega\mu_0 \left\langle W\psi - \left(\tfrac{1}{2}I - K^{\texttt{t}}\right) \lambda, \varphi \right \rangle_\Gamma= 0 .
\end{multline}
Finally, testing \eqref{ModelProblemFemBEM5} with $\eta \in \rmH^{-1/2}(\Gamma)$ gives,
\begin{equation}\label{ccc1}
\imath \omega \mu_0 \dual{\eta, \left(\tfrac{1}{2}I - K\right) \psi}_\Gamma+ \imath \omega \mu_0\dual{\eta, V\lambda}_\Gamma = 0.
\end{equation}

Inspired from \eqref{bbb1} and \eqref{ccc1} we propose the following DG-FEM/BEM formulation for problem \eqref{ModelProblem}:
Find $(\bu_h, \psi_h)\in \mathbf{X}_h \times \Psi_h$ and $\lambda_h\in  \Lambda_h$ such that
\begin{equation}\label{ldg-FemBem}
 \begin{array}{rclcl}
 A_h((\bu_h, \psi_h),(\bv,\varphi) ) &-&\imath \omega \mu_0\, \dual{\lambda_h, (\tfrac{1}{2}I - K) \varphi}_\Gamma &=& L_h((\bv,\varphi) ) \\[2ex]
 \imath \omega \mu_0\, \dual{\eta, (\tfrac{1}{2}I - K) \psi_h}_\Gamma &+& \imath \omega\, \mu_0\dual{\eta, V\lambda_h}_\Gamma &=& 0,
 \end{array}
\end{equation}
for all $(\bv, \varphi)\in \mathbf{X}_h \times \Psi_h$ and $\eta\in \Lambda_h$, 
where 
\begin{align*}
 A_h((\bu_h, \psi_h),(\bv,\varphi) ) &:= 
 \imath \omega (\mu \bu_h,\ \bv)_{\cT_h} + (\sigma^{-1} \curl_h \bu_h, \curl_h \bv)_{\cT_h} + \imath \omega \mu_0 \dual{W \psi_h, \varphi}_{\Gamma}\\[1ex]
  &+ \dual{\mean{\sigma^{-1}\curl_h \bu_h}, \jump{(\bv,\varphi) }}_{\cF_h} 
   - \overline{\dual{\mean{\sigma^{-1}\curl_h \bv}, \jump{(\bu_h, \psi_h)}}}_{\cF_h}\\
  &+  \alpha \dual{\mathtt{s}^{-1}_{\cF} h_{\cF}^{-1} \jump{(\bu_h, \psi_h)},\jump{(\bv,\varphi) }}_{\cF_h},
\end{align*}
with a parameter $\alpha\geq 0$ and
\[
 L_h((\bv,\varphi) ) := (\sigma^{-1}\bj_e, \curl_h \bv)_{\cT_h} + \left\langle \mean{\sigma^{-1}\bj_e}, \jump{(\bv,\varphi) } \right\rangle_{\cF_h}.
\]

The following proposition shows that the DG-FEM/BEM scheme \eqref{ldg-FemBem} is consistent.

\begin{prop} 
Let $((\bH, \psi), \lambda)\in [\mathbf{H}(\curl,\Omega)\times \rmH_0^{1/2}(\Gamma)]\times \rmH_0^{-1/2}(\Gamma)$ be the solution of \eqref{ModelProblemFemBEM1}-\eqref{ModelProblemFemBEM5}. Assume that $\sigma^{-1} \bj_e\in \mathbf{H}^{s}(\cT_h)$  and that $(\bH,\psi)\in \mathbf{H}^s(\curl, \cT_h)\times \rmH^{1}(\cF_h^\Gamma)$,  with $s>1/2$, then
	\begin{equation*}\label{diffBEM}
	 \begin{array}{rclcl}
	 A_h((\bH,\psi),(\bv,\varphi) ) &-&\imath \omega \mu_0 \dual{\lambda, (\tfrac{1}{2}I - K) \varphi}_\Gamma &=& L_h((\bv,\varphi) ), \\[2ex]
	 \imath \omega \mu_0 \dual{\eta, (\tfrac{1}{2}I - K) \psi}_\Gamma &+& \imath \omega \mu_0\dual{\eta, V\lambda}_\Gamma &=& 0,
	 \end{array}
	\end{equation*}
	for all $(\bv, \varphi)\in \mathbf{X}_h \times \Psi_h$ and $\eta\in \Lambda_h$.
\end{prop}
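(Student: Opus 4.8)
The plan is to verify consistency directly: substitute the exact quantities $((\bH,\psi),\lambda)$ for the discrete unknowns in \eqref{ldg-FemBem} and show that the resulting two equations coincide, term by term, with the identities \eqref{bbb1} and \eqref{ccc1} already extracted from the continuous model \eqref{ModelProblemFemBEM1}--\eqref{ModelProblemFemBEM5}. This is legitimate because the discrete test functions are admissible in those identities: $\mathbf{X}_h\subset\mathbf{H}^s(\curl,\cT_h)$, $\Psi_h\subset\rmH^{1/2}(\Gamma)\cap\rmH^1(\cF_h^\Gamma)$ and $\Lambda_h\subset\rmH^{-1/2}(\Gamma)$, so \eqref{bbb1} holds for every $(\bv,\varphi)\in\mathbf{X}_h\times\Psi_h$ and \eqref{ccc1} for every $\eta\in\Lambda_h$.

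The decisive observation is that the jump of the exact pair vanishes on every face, $\jump{(\bH,\psi)}=0$ on $\cF_h$. On an interior face $F\in\cF_h^0$ this reads $\jump{\bH\times\n}_F=0$ and holds because $\bH\in\mathbf{H}(\curl,\Omega)$ has a single-valued tangential trace across interelement faces. On a boundary face $T\in\cF_h^\Gamma$ the definition gives $\jump{(\bH,\psi)}|_T=\bH|_T\times\n-\curl_T\psi$, which equals zero by the transmission condition \eqref{ModelProblemFemBEM3}, $\bH\times\n=\curl_\Gamma\psi$, once $\curl_T\psi$ is identified with the restriction of $\curl_\Gamma\psi$ to $T$. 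As a consequence, in $A_h((\bH,\psi),(\bv,\varphi))$ the adjoint-consistency term $-\overline{\dual{\mean{\sigma^{-1}\curl_h\bv},\jump{(\bH,\psi)}}}_{\cF_h}$ and the penalty term $\alpha\dual{\mathtt{s}_\cF^{-1}h_\cF^{-1}\jump{(\bH,\psi)},\jump{(\bv,\varphi)}}_{\cF_h}$ both drop out, for every test pair and independently of the value of $\alpha\geq0$.

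What remains is pure bookkeeping. Setting $\bE:=\sigma^{-1}(\curl\bH-\bj_e)$ so that $\sigma^{-1}\curl_h\bH=\bE+\sigma^{-1}\bj_e$, the surviving volume term $(\sigma^{-1}\curl_h\bH,\curl_h\bv)_{\cT_h}$ and face term $\dual{\mean{\sigma^{-1}\curl_h\bH},\jump{(\bv,\varphi)}}_{\cF_h}$ each split into an $\bE$-part and a $\bj_e$-part; the two $\bj_e$-parts are exactly the two pieces of $L_h((\bv,\varphi))$ and cancel against it. Invoking the adjoint relation $\dual{\lambda,(\tfrac{1}{2} I-K)\varphi}_\Gamma=\dual{(\tfrac{1}{2} I-K^{\texttt{t}})\lambda,\varphi}_\Gamma$ for the coupling term, the first equation of \eqref{ldg-FemBem} collapses to exactly \eqref{bbb1}; the second equation is \eqref{ccc1} restricted to $\eta\in\Lambda_h$. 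Both are already established, which closes the argument.

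I expect the only genuinely delicate point to be the boundary-face jump: one must justify that the elementwise surface curl $\curl_T\psi$ of the continuous, piecewise-$\rmH^1$ potential $\psi$ agrees, in the $\rmL^2(T)$ pairing entering $\dual{\cdot,\cdot}_{\cF_h}$, with the trace on $T$ of the global $\curl_\Gamma\psi$ of \eqref{ModelProblemFemBEM3}. On the open face this is immediate since the surface curl is a local operator, and because the duality pairing only probes interior-of-face values, the possible edge-supported discrepancies between $\curl_T\psi$ and $\curl_\Gamma\psi$ play no role. Everything else reduces to matching terms.
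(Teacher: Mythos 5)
Your proposal is correct and follows essentially the same route as the paper: the paper's proof likewise reduces the claim to identities \eqref{bbb1} and \eqref{ccc1}, using $\bH\in\mathbf{H}(\curl,\Omega)$ to kill the interior-face jumps and the transmission condition \eqref{ModelProblemFemBEM3} to kill the boundary-face jumps, so that the adjoint-consistency and penalty terms vanish. Your additional bookkeeping (the splitting $\sigma^{-1}\curl_h\bH=\bE+\sigma^{-1}\bj_e$ matching $L_h$, and the adjoint relation $\dual{\lambda,(\tfrac{1}{2}I-K)\varphi}_\Gamma=\dual{(\tfrac{1}{2}I-K^{\texttt{t}})\lambda,\varphi}_\Gamma$) is exactly what the paper leaves implicit.
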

\begin{proof}
The result is a direct consequence of identities \eqref{bbb1} and \eqref{ccc1}, having used the fact that $\bH \in \mathbf{H}(\curl,\Omega)$, thus that $\jump{\bH \times \n}_F=0$ for each $F \in {\mathcal F}_h^0$, and equation \eqref{ModelProblemFemBEM3}, giving $\bH_{|T} \times \n = \curl_T \psi$ for each $T  \in {\mathcal F}_h^\Gamma$.
\end{proof}

\section{Convergence analysis}\label{section6}
We introduce the bilinear form
\begin{multline*}
\mathbb{A}_h\left( ((\bu, \phi), \zeta), ((\bv,\varphi) , \eta) \right):= A_h((\bu, \phi),(\bv,\varphi) ) -\imath \omega \mu_0 \dual{\zeta, (\tfrac{1}{2}I - K) \varphi}_\Gamma\\ 
+ \imath \omega \mu_0 \dual{\bar\eta, (\tfrac{1}{2}I - K) \bar\phi}_\Gamma+ \imath \omega \mu_0 \dual{\bar\eta, V\bar\zeta}_\Gamma
\end{multline*}
and 
 define in $\big(\mathbf{H}^s(\curl, \cT_h)\times [\rmH^{1/2}(\Gamma)\cap\rmH^1(\cF_h^\Gamma)]\big)\times \rmH^{-1/2}(\Gamma)$ 
the norms 
\begin{multline*}
\trinorm{((\bv,\varphi) , \eta)}^2 := \norm{(\omega\mu)^{1/2}\bv}^2_{0,\Omega} + \norm{\sigma^{-1/2}\curl_h \bv}^2_{0,\Omega} 
+ \norm{\mathtt{s}^{-1/2}_{\cF} h_{\cF}^{-1/2}\jump{(\bv,\varphi) }}^2_{0,\cF_h}\\ + \omega\mu_0\norm{\varphi}^2_{1/2,\Gamma} + \omega\mu_0\norm{\eta}^2_{-1/2,\Gamma}
\end{multline*}
and
\[
\trinorm{((\bv,\varphi) , \eta)}_{\ast}^2 :=  \trinorm{((\bv,\varphi) , \eta)}^2 + \norm{\mathtt{s}_{\cF}^{1/2} h_{\cF}^{1/2} \mean{\sigma^{-1}\curl_h \bv}}^2_{0,\cF_h}.
\]

The following discrete trace inequality is standard, (see, e.g., \cite[Lemma 1.46]{DiPietroErn}).
\begin{lemma}
 For all integer $k\geq 0$ there exists a constant $C^*>0$, independent of $h$, such that,
 \begin{equation}\label{discreteTrace3D} 
  h_K \norm{ v}^2_{0,\partial K} \leq C^* \norm{ v}^2_{0,K} \quad \forall v\in \cP_k(K),\quad 
  \forall K\in \cT_h.
 \end{equation}
\end{lemma}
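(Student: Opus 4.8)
The plan is to prove the estimate on a fixed reference tetrahedron and then transport it to an arbitrary $K\in\cT_h$ through an affine pullback, using shape-regularity to keep every constant independent of $h$ and of the particular element. Let $\widehat K$ denote the reference tetrahedron and let $F_K(\widehat\x)=B_K\widehat\x+\boldsymbol{b}_K$ be the affine bijection with $F_K(\widehat K)=K$; for $v\in\cP_k(K)$ I set $\widehat v:=v\circ F_K\in\cP_k(\widehat K)$. The whole point is that $k$ is fixed, so $\cP_k(\widehat K)$ is a finite-dimensional space on which all the scaling is harmless.

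On $\widehat K$ the functional $\norm{\cdot}_{0,\widehat K}$ is a genuine norm on $\cP_k(\widehat K)$, while $\widehat v\mapsto\norm{\widehat v}_{0,\partial\widehat K}$ is a continuous seminorm (the restriction to $\partial\widehat K$ of a polynomial is well defined and linear in $\widehat v$). Since the space is finite dimensional, the seminorm is dominated by the norm, so there exists a constant $\widehat C=\widehat C(k)>0$, depending only on $k$ and on $\widehat K$, with
\[
\norm{\widehat v}^2_{0,\partial\widehat K}\le \widehat C\,\norm{\widehat v}^2_{0,\widehat K}\qquad\forall\,\widehat v\in\cP_k(\widehat K).
\]

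The second step records how the two $\rmL^2$-norms scale under $F_K$. The volume change of variables gives the exact identity $\norm{v}^2_{0,K}=\abs{\det B_K}\,\norm{\widehat v}^2_{0,\widehat K}$. On a face, the surface measure transforms with Jacobian $\abs{\det B_K}\,\abs{B_K^{-\mathsf T}\widehat\n}\le\abs{\det B_K}\,\norm{B_K^{-1}}$, and summing over the four faces yields $\norm{v}^2_{0,\partial K}\le C\,\abs{\det B_K}\,\norm{B_K^{-1}}\,\norm{\widehat v}^2_{0,\partial\widehat K}$. Shape-regularity of $\{\cT_h\}_h$ provides $h_K\norm{B_K^{-1}}\le C$ uniformly in $K$ and $h$, and the factor $\abs{\det B_K}$ then cancels against the volume scaling:
\[
h_K\norm{v}^2_{0,\partial K}\le C\,h_K\abs{\det B_K}\norm{B_K^{-1}}\norm{\widehat v}^2_{0,\partial\widehat K}\le C\abs{\det B_K}\norm{\widehat v}^2_{0,\partial\widehat K}\le C\widehat C\abs{\det B_K}\norm{\widehat v}^2_{0,\widehat K}=C\widehat C\norm{v}^2_{0,K},
\]
so that $C^*:=C\widehat C$ does the job.

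The only genuinely delicate point is the uniformity of the constants, i.e.\ checking that shape-regularity indeed delivers $h_K\norm{B_K^{-1}}\le C$ with $C$ independent of $K$ and $h$, which is exactly what makes the extra $h_K$ in the statement cancel the surface Jacobian. Everything else is the standard reference-element mechanism; this is why the estimate can simply be quoted from \cite[Lemma 1.46]{DiPietroErn}.
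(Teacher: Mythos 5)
Your proof is correct, and it is essentially the argument behind the paper's own treatment: the paper does not prove the lemma but simply quotes it as standard from \cite[Lemma 1.46]{DiPietroErn}, whose proof is precisely this reference-element mechanism (norm/seminorm domination on the finite-dimensional space $\cP_k(\widehat K)$, volume and surface Jacobian scaling under the affine map, and shape-regularity giving $h_K\norm{B_K^{-1}}\leq C$ uniformly in $K$ and $h$). All the scaling identities you record, including the surface-measure factor $\abs{\det B_K}\,\abs{B_K^{-\mathsf T}\widehat{\boldsymbol{n}}}$ and the cancellation of $\abs{\det B_K}$, are accurate, so your write-up is a valid self-contained substitute for the citation.
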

It allow us to prove the following result.
\begin{lemma}\label{equiv}
For all $k\geq 0$, there exist a constants $C_\Omega>0$, independent 
of the mesh size and the coefficients, such that  
\begin{equation}\label{discIneq1}
\norm{\mathtt{s}_{\cF}^{1/2} h_{\cF}^{1/2} \mean{\sigma^{-1}\mathbf{w}}}_{0,\cF_h} \leq C_\Omega \norm{\sigma^{-1/2} \mathbf{w}}_{0,\Omega},\quad \mathbf{w} \in \prod_{K\in \cT_h}\cP_k(K)^3
\end{equation}
\end{lemma}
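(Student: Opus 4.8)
The plan is to expand the left-hand side face by face, bound each face term by the volume contributions of the adjacent elements, and then sum, regrouping by elements. Writing
\[
\norm{\mathtt{s}_{\cF}^{1/2} h_{\cF}^{1/2} \mean{\sigma^{-1}\mathbf{w}}}_{0,\cF_h}^2
= \sum_{F\in\cF_h^0} \mathtt{s}_F h_F \norm{\mean{\sigma^{-1}\mathbf{w}}}_{0,F}^2
+ \sum_{T\in\cF_h^\Gamma} \mathtt{s}_T h_T \norm{\sigma^{-1}\mathbf{w}}_{0,T}^2,
\]
I would treat interior and boundary faces separately. For an interior face $F=\partial K\cap\partial K'$ I use the definition $\mean{\sigma^{-1}\mathbf{w}}|_F = \tfrac12(\sigma|_K^{-1}\mathbf{w}_K + \sigma|_{K'}^{-1}\mathbf{w}_{K'})$ together with $\norm{\tfrac12(a+b)}_{0,F}^2 \le \tfrac12(\norm{a}_{0,F}^2 + \norm{b}_{0,F}^2)$ to obtain
\[
\mathtt{s}_F h_F \norm{\mean{\sigma^{-1}\mathbf{w}}}_{0,F}^2
\le \tfrac12\, h_F\big(\mathtt{s}_F\,\sigma|_K^{-2}\norm{\mathbf{w}_K}_{0,F}^2 + \mathtt{s}_F\,\sigma|_{K'}^{-2}\norm{\mathbf{w}_{K'}}_{0,F}^2\big).
\]

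The crucial step, and the reason the constant can be made independent of the coefficients, is the choice $\mathtt{s}_F = \min(\sigma|_K,\sigma|_{K'})$, which yields $\mathtt{s}_F\,\sigma|_K^{-2} \le \sigma|_K^{-1}$ and $\mathtt{s}_F\,\sigma|_{K'}^{-2} \le \sigma|_{K'}^{-1}$, so the conductivity on the neighbour through which the trace is taken cancels against the weight. After this I bound $h_F \le h_K$ and $\norm{\mathbf{w}_K}_{0,F}^2 \le \norm{\mathbf{w}_K}_{0,\partial K}^2$ (since $F\subset\partial K$), and apply the discrete trace inequality \eqref{discreteTrace3D} componentwise to the polynomial $\mathbf{w}_K\in\cP_k(K)^3$, giving $h_F\norm{\mathbf{w}_K}_{0,F}^2 \le C^*\norm{\mathbf{w}_K}_{0,K}^2$, and symmetrically for $K'$. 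The boundary faces $T=\partial K\cap\Gamma$ are handled in exactly the same way, now with $\mathtt{s}_T = \sigma|_K$, so that $\mathtt{s}_T\,\sigma|_K^{-2} = \sigma|_K^{-1}$ directly.

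Finally I sum over all faces and regroup by elements. Since each tetrahedron $K$ has exactly four faces, the volume term $\sigma|_K^{-1}\norm{\mathbf{w}_K}_{0,K}^2$ appears at most four times in the reassembled sum, whence the whole expression is bounded by $4C^*\,\norm{\sigma^{-1/2}\mathbf{w}}_{0,\Omega}^2$. This proves \eqref{discIneq1} with $C_\Omega = 2\sqrt{C^*}$, a constant depending only on the shape-regularity of $\{\cT_h\}_h$ through $C^*$, and in particular independent of both $h$ and $\sigma$. I do not expect any genuine obstacle: the only delicate point is the bookkeeping of the conductivity weights at the element interfaces, which the definition $\mathtt{s}_F=\min(\sigma|_K,\sigma|_{K'})$ is precisely engineered to neutralise.
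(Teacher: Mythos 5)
Your proof is correct and follows essentially the same route as the paper's: expand the weighted norm face by face, use the definition $\mathtt{s}_F=\min(\sigma|_K,\sigma|_{K'})$ to absorb the coefficient into $\sigma_K^{-1/2}$, bound $h_F\le h_K$, regroup the face contributions by elements, and conclude with the discrete trace inequality \eqref{discreteTrace3D}. The only difference is that you carry out the bookkeeping (the factor $\tfrac12$ on interior faces, the count of four faces per tetrahedron, the explicit constant $C_\Omega=2\sqrt{C^*}$) more explicitly than the paper does.
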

\begin{proof}
By definition of $\mathtt{s}_{\cF}$, for any $\mathbf{w} \in \prod_{K\in \cT_h}\cP_k(K)^3$,
\begin{multline*}
\norm{\mathtt{s}_{\cF}^{1/2} h_{\cF}^{1/2} \mean{\sigma^{-1}\mathbf{w}}}^2_{0,\cF_h} =  
\sum_{F\in \cF_h} h_F \norm{ \mathtt{s}_F^{1/2}\mean{\sigma^{-1}\mathbf{w}}_F }^2_{0,F}\\ 
\leq \sum_{K\in \cT_h} \sum_{F\in \cF(K)} h_F \norm{ \mathtt{s}_F^{1/2}\sigma_K^{-1}\mathbf{w}_K }^2_{0,F}
\leq  \sum_{K\in \cT_h}  h_K 
\norm{ \sigma_K^{-1/2}\mathbf{w}_K }^2_{0,\partial K},
\end{multline*}
where $\cF(K)$ denotes the set of faces composing the boundary of $K$, namely, $\cF(K) := \set{F\in \cF_h;\quad F\subset \partial K}$. 
Then the result follows from \eqref{discreteTrace3D}.
\end{proof}

\begin{prop}\label{boundednessBEM}
There exists a constant $M^*>0$ independent of $h$ such that 
\[
| \mathbb{A}_h\left( ((\bu, \phi), \zeta), ((\bv,\varphi) , \eta) \right) | \leq M^* \trinorm{((\bu, \phi), \zeta)}_{\ast} \trinorm{((\bv,\varphi) , \eta)}
\]
for all $((\bv,\varphi),\eta)\in (\mathbf{X}_h \times \Psi_h) \times \Lambda_h$ and for all $((\bu, \phi),\zeta)\in \big(\mathbf{H}^s(\curl, \cT_h)\times [\rmH^{1/2}(\Gamma) \cap \rmH^{1}(\cF_h^\Gamma)] \big)\times H^{-1/2}(\Gamma)$, 
with $s>1/2$. 
\end{prop}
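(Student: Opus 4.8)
The plan is to bound $\mathbb{A}_h$ term by term, matching each contribution to a product of one factor drawn from $\trinorm{((\bu,\phi),\zeta)}_{\ast}$ and one factor drawn from $\trinorm{((\bv,\varphi),\eta)}$, and then to collect the finitely many estimates through a discrete Cauchy--Schwarz inequality. The two volume contributions of $A_h$ are immediate: by Cauchy--Schwarz in $\mathbf{L}^2(\Omega)$ one has $|\imath\omega(\mu\bu,\bv)_{\cT_h}|\le\norm{(\omega\mu)^{1/2}\bu}_{0,\Omega}\norm{(\omega\mu)^{1/2}\bv}_{0,\Omega}$ and $|(\sigma^{-1}\curl_h\bu,\curl_h\bv)_{\cT_h}|\le\norm{\sigma^{-1/2}\curl_h\bu}_{0,\Omega}\norm{\sigma^{-1/2}\curl_h\bv}_{0,\Omega}$, each factor sitting directly in the respective triple norm. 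The penalty term is treated the same way after distributing the weight $\mathtt{s}_\cF^{-1/2}h_\cF^{-1/2}$ symmetrically onto the two jumps, bounding it by $\alpha$ times the two jump seminorms, which likewise appear in both norms.

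For the three boundary pairings I would simply invoke the mapping properties recalled in Section~\ref{s2}: boundedness of $K:\rmH^{1/2}(\Gamma)\to\rmH^{1/2}(\Gamma)$ (hence of $\tfrac12 I-K$), of $V:\rmH^{-1/2}(\Gamma)\to\rmH^{1/2}(\Gamma)$, and of $W:\rmH^{1/2}(\Gamma)\to\rmH^{-1/2}(\Gamma)$, together with the duality between $\rmH^{-1/2}(\Gamma)$ and $\rmH^{1/2}(\Gamma)$. This gives $|\imath\omega\mu_0\dual{W\phi,\varphi}_\Gamma|\lesssim\omega\mu_0\norm{\phi}_{1/2,\Gamma}\norm{\varphi}_{1/2,\Gamma}$, $|\imath\omega\mu_0\dual{\zeta,(\tfrac12 I-K)\varphi}_\Gamma|\lesssim\omega\mu_0\norm{\zeta}_{-1/2,\Gamma}\norm{\varphi}_{1/2,\Gamma}$, and similarly for the $\dual{\bar\eta,(\tfrac12 I-K)\bar\phi}_\Gamma$ and $\dual{\bar\eta,V\bar\zeta}_\Gamma$ terms; writing $\omega\mu_0=(\omega\mu_0)^{1/2}(\omega\mu_0)^{1/2}$ distributes the scaling so that each resulting factor matches the $\rmH^{\pm1/2}(\Gamma)$ entries of the two triple norms.

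The main obstacle lies in the two consistency (numerical flux) face terms, and it is here that the asymmetry between the two norms is essential. For $\dual{\mean{\sigma^{-1}\curl_h\bu},\jump{(\bv,\varphi)}}_{\cF_h}$ I would insert $\mathtt{s}_\cF^{1/2}h_\cF^{1/2}$ against $\mathtt{s}_\cF^{-1/2}h_\cF^{-1/2}$ and apply Cauchy--Schwarz on $\mathbf{L}^2(\cF_h)$, bounding it by $\norm{\mathtt{s}_\cF^{1/2}h_\cF^{1/2}\mean{\sigma^{-1}\curl_h\bu}}_{0,\cF_h}\norm{\mathtt{s}_\cF^{-1/2}h_\cF^{-1/2}\jump{(\bv,\varphi)}}_{0,\cF_h}$; the first factor is precisely the extra term present in $\trinorm{((\bu,\phi),\zeta)}_{\ast}$ and the second belongs to $\trinorm{((\bv,\varphi),\eta)}$. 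The reversed flux term $\overline{\dual{\mean{\sigma^{-1}\curl_h\bv},\jump{(\bu,\phi)}}}_{\cF_h}$ is the \emph{delicate} one: it carries a mean of $\curl_h\bv$, yet the second argument is measured only in the plain norm $\trinorm{\cdot}$, which contains no mean term. The key observation is that $\bv\in\mathbf{X}_h$, so $\curl_h\bv$ is piecewise polynomial and Lemma~\ref{equiv} (inequality \eqref{discIneq1}) applies with $\mathbf{w}=\curl_h\bv$, yielding $\norm{\mathtt{s}_\cF^{1/2}h_\cF^{1/2}\mean{\sigma^{-1}\curl_h\bv}}_{0,\cF_h}\le C_\Omega\norm{\sigma^{-1/2}\curl_h\bv}_{0,\Omega}$; the right-hand side lives in $\trinorm{((\bv,\varphi),\eta)}$, while the companion factor $\norm{\mathtt{s}_\cF^{-1/2}h_\cF^{-1/2}\jump{(\bu,\phi)}}_{0,\cF_h}$ lives in $\trinorm{\cdot}_{\ast}$. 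This is exactly why $\bv$ must be discrete and why the $\ast$-norm is needed only on the first argument.

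Finally I would add the finitely many estimates and apply the discrete Cauchy--Schwarz inequality $\sum_i a_i b_i\le(\sum_i a_i^2)^{1/2}(\sum_i b_i^2)^{1/2}$ to the collected factors $a_i$ (from the first argument) and $b_i$ (from the second), observing that every $a_i$ is a summand of $\trinorm{((\bu,\phi),\zeta)}_{\ast}$ and every $b_i$ a summand of $\trinorm{((\bv,\varphi),\eta)}$. This produces the claimed bound with $M^*$ depending only on $\alpha$, $C_\Omega$, and the operator norms of $V$, $K$, $W$, all independent of $h$. I do not need to track the role of $s>1/2$ beyond noting that it guarantees the tangential traces of $\curl_h\bu$ are well defined in $\mathbf{L}^2(\cF_h)$, so that all the face integrals above make sense.
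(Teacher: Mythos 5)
Your proposal is correct and follows essentially the same route as the paper's (much terser) proof: Cauchy--Schwarz on the volume, penalty and flux terms, the mapping properties of $V$, $K$, $W$ for the boundary pairings, and Lemma~\ref{equiv} to absorb $\norm{\mathtt{s}_\cF^{1/2}h_\cF^{1/2}\mean{\sigma^{-1}\curl_h\bv}}_{0,\cF_h}$ into $\norm{\sigma^{-1/2}\curl_h\bv}_{0,\Omega}$ for discrete $\bv$, which is exactly what the paper means by the equivalence of $\trinorm{\cdot}$ and $\trinorm{\cdot}_{\ast}$ on $(\mathbf{X}_h\times\Psi_h)\times\Lambda_h$. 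Your identification of the reversed flux term as the one place where discreteness of $\bv$ is essential is precisely the point the paper's one-line proof leaves implicit.
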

\begin{proof}
The result follows immediately from the Cauchy-Schwarz inequality, the boundedness of the mappings $V:\, \rmH^{-1/2}(\Gamma)\to \rmH^{1/2}(\Gamma)$, $K:\, \rmH^{1/2}(\Gamma) \to \rmH^{1/2}(\Gamma)$ and $W:\, \rmH^{1/2}(\Gamma)\to \rmH^{-1/2}(\Gamma)$
and from the fact that  the norms $\trinorm{\cdot}$ and $\trinorm{\cdot}_{\ast}$ are equivalent in $(\mathbf{X}_h \times \Psi_h) \times \Lambda_h$ (as a consequence of  Lemma~\ref{equiv}). 
\end{proof}

\begin{prop}\label{discElipBEM} 
There exists a constant $\beta^*>0$ ,independent of $h$, such that  
\[
\text{Re}\left[ (1 - \imath) \mathbb{A}_h\left( ((\bv,  \varphi), \eta), ((\bar \bv, \bar \varphi), \bar\eta)\right)\right] \geq \beta^*  \trinorm{((\bv, \varphi) \eta)}^2
\]
for all $((\bv,\varphi), \eta)\in (\mathbf{X}_h\times \Psi_h)\times \Lambda_h$.
\end{prop}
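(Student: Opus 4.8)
The plan is to evaluate $\mathbb{A}_h$ on the ``diagonal'' pair $\big(((\bv,\varphi),\eta),((\bar\bv,\bar\varphi),\bar\eta)\big)$ and to exploit the skew-symmetric structure of the consistency terms. Since the products $(\cdot,\cdot)_{\cT_h}$ and the pairing $\dual{\cdot,\cdot}_{\cF_h}$ are bilinear, replacing the second argument by the conjugate of the first turns the volume contributions into Hermitian forms, so that $\imath\omega(\mu\bv,\bar\bv)_{\cT_h}=\imath\,\norm{(\omega\mu)^{1/2}\bv}^2_{0,\Omega}$ and $(\sigma^{-1}\curl_h\bv,\curl_h\bar\bv)_{\cT_h}=\norm{\sigma^{-1/2}\curl_h\bv}^2_{0,\Omega}$, while the penalty term produces $\alpha\norm{\mathtt{s}^{-1/2}_{\cF}h_{\cF}^{-1/2}\jump{(\bv,\varphi)}}^2_{0,\cF_h}$. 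The decisive observation is that the two consistency terms $\dual{\mean{\sigma^{-1}\curl_h\bv},\jump{(\bar\bv,\bar\varphi)}}_{\cF_h}$ and $-\overline{\dual{\mean{\sigma^{-1}\curl_h\bar\bv},\jump{(\bv,\varphi)}}}_{\cF_h}$ in $A_h$ cancel \emph{exactly} on the diagonal, because $\overline{\mean{\sigma^{-1}\curl_h\bar\bv}}=\mean{\sigma^{-1}\curl_h\bv}$ and $\overline{\jump{(\bv,\varphi)}}=\jump{(\bar\bv,\bar\varphi)}$; likewise the two $K$-coupling terms $-\imath\omega\mu_0\dual{\eta,(\tfrac12 I-K)\bar\varphi}_\Gamma$ and $+\imath\omega\mu_0\dual{\eta,(\tfrac12 I-K)\bar\varphi}_\Gamma$ cancel. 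Thus all the non-symmetric off-diagonal contributions disappear.

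After this cancellation only five terms survive, which I would group by their real/imaginary character. Writing $a:=\norm{\sigma^{-1/2}\curl_h\bv}^2_{0,\Omega}$, $b:=\alpha\norm{\mathtt{s}^{-1/2}_{\cF}h_{\cF}^{-1/2}\jump{(\bv,\varphi)}}^2_{0,\cF_h}$, $c:=\norm{(\omega\mu)^{1/2}\bv}^2_{0,\Omega}$, $d:=\omega\mu_0\dual{W\varphi,\bar\varphi}_\Gamma$ and $e:=\omega\mu_0\dual{\eta,V\bar\eta}_\Gamma$, the symmetry of $W$ and $V$ makes $d$ and $e$ real, and all five quantities are nonnegative; hence $\mathbb{A}_h\big(((\bv,\varphi),\eta),((\bar\bv,\bar\varphi),\bar\eta)\big)=(a+b)+\imath(c+d+e)$. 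The role of the rotation factor $(1-\imath)$ is now transparent: a one-line computation gives $\mathrm{Re}\big[(1-\imath)\mathbb{A}_h\big]=a+b+c+d+e$, so that the real coercive contributions (curl and penalty) and the imaginary ones (mass, $W$ and $V$) are \emph{added} rather than competing.

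It then remains to bound each of the five nonnegative quantities below by the corresponding term of $\trinorm{((\bv,\varphi),\eta)}^2$. The terms $c$ and $a$ coincide with the first two terms of the norm. For the boundary terms I would invoke the coercivity estimates \eqref{eq-coercive-1} and \eqref{eq-coercive-2}: since $\varphi\in\Psi_h$ satisfies $\int_\Gamma\varphi=0$, the mean-value term in \eqref{eq-coercive-2} drops and $d\ge C_W\,\omega\mu_0\norm{\varphi}^2_{1/2,\Gamma}$, while choosing $\chi=\bar\eta$ in \eqref{eq-coercive-1} yields $e\ge C_V\,\omega\mu_0\norm{\eta}^2_{-1/2,\Gamma}$; the penalty term $b$ controls the jump seminorm as long as $\alpha>0$. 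Collecting these bounds gives the claim with $\beta^*:=\min\{1,\alpha,C_V,C_W\}$. The only delicate point—and the place where an error would creep in—is the exact cancellation of the non-symmetric consistency and $K$-coupling terms on the diagonal, together with the sign bookkeeping induced by conjugating the second argument; once that is settled, the coercivity is purely algebraic and relies solely on the positivity built into \eqref{eq-coercive-1}--\eqref{eq-coercive-2} and on $\int_\Gamma\varphi=0$.
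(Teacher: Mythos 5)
Your proposal is correct and follows essentially the same route as the paper: evaluate $\mathbb{A}_h$ on the diagonal, note that the two consistency terms and the two $K$-coupling terms cancel exactly (the paper leaves this cancellation implicit, you spell it out), use that $\mathrm{Re}[(1-\imath)z]=\mathrm{Re}\,z+\mathrm{Im}\,z$ to add the real and imaginary contributions, and conclude via \eqref{eq-coercive-1}, \eqref{eq-coercive-2} and $\int_\Gamma\varphi=0$ with the same constant $\beta^*=\min(1,\alpha,C_V,C_W)$. Your parenthetical caveat that the jump term is only controlled when $\alpha>0$ is well taken (the paper's constant also degenerates as $\alpha\to 0$), but this matches the paper's own argument.
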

\begin{proof}
By definition of $\mathbb{A}_h(\cdot, \cdot)$, 
\begin{multline*}
\text{Re}\left[ (1 - \imath) \mathbb{A}_h\left( ((\bv,  \varphi), \eta), ((\bar \bv, \bar \varphi), \bar\eta)\right)\right] = \omega \norm{\mu^{1/2} \bv}_{0,\Omega}^2 + \norm{\sigma^{-1/2} \curl_h \bv}^2_{0,\Omega} 
\\+ \alpha \norm{\mathtt{s}^{-1/2}_{\cF} h_{\cF}^{-1/2}\jump{(\bv,\varphi) _h} }^2_{0,\cF_h}
 +  \omega \mu_0\dual{\eta, V\bar \eta}_\Gamma 
+  \omega \mu_0 \dual{W \bar\varphi, \varphi}_{\Gamma}
\end{multline*}
and using \eqref{eq-coercive-1} and \eqref{eq-coercive-2} we deduce the result with $\beta^* = \min(1, \alpha, C_V, C_W)$.
\end{proof}

We deduce readily from the consistency of  the DG-FEM/BEM scheme, Proposition \ref{boundednessBEM} and  Proposition \ref{discElipBEM} the following C\'ea estimate.

\begin{theorem}\label{ceaBem}
Assume that $\sigma^{-1} \bj_e\in \mathbf{H}^{s}(\cT_h)$, with $s>1/2$. Then, the DG-FEM/BEM formulation \eqref{ldg-FemBem} has a unique solution for any parameter $\alpha \geq 0$.
Moreover if $((\bH, \psi), \lambda)\in [\mathbf{H}(\curl,\Omega)\times H_0^{1/2}(\Gamma)]\times H_0^{-1/2}(\Gamma)$ and $((\bu_h, \psi_h), \lambda_h)\in ((\mathbf{X}_h\times \Psi_h)\times \Lambda_h)$ are the solutions of \eqref{ModelProblemFemBEM1}-\eqref{ModelProblemFemBEM5}
and \eqref{ldg-FemBem} respectively,  and $(\bH,\psi)\in \mathbf{H}^s(\curl, \cT_h)\times \rmH^{1}(\cF_h^\Gamma)$,  with $s>1/2$, then,  
\begin{equation*}
\trinorm{((\bH - \bu_h, \psi - \psi_h), \lambda - \lambda_h)} \leq (1 + \frac{M^*}{\beta^*}) 
\trinorm{((\bH - \bv, \psi - \varphi), \lambda - \eta)}_*,
\end{equation*}
for all $((\bv, \varphi),\eta)\in (\mathbf{X}_h\times \Psi_h)\times \Lambda_h$.
\end{theorem}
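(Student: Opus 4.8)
The plan is to read the coupled scheme \eqref{ldg-FemBem} as the single relation $\mathbb{A}_h(((\bu_h,\psi_h),\lambda_h),((\bv,\varphi),\eta))=L_h((\bv,\varphi))$ tested against all discrete triples, so that the three ingredients already assembled --- consistency, the boundedness of Proposition~\ref{boundednessBEM}, and the discrete coercivity of Proposition~\ref{discElipBEM} --- fall into the classical C\'ea--Strang pattern. Here one should first record that $\mathbb{A}_h(\cdot,\text{test})=L_h$ encodes the first equation of \eqref{ldg-FemBem} together with the complex conjugate of the second (this is exactly what the bars on $\bar\eta,\bar\phi,\bar\zeta$ produce, using that $\dual{\cdot,\cdot}_\Gamma$ is the bilinear extension of the $\rmL^2$ pairing and that $V,K,W$ commute with conjugation), and that $\mathbb{A}_h$ is additive in each slot even though it is conjugate-linear in $\phi$ and $\zeta$. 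Since the trial and test spaces coincide and are finite dimensional, well-posedness reduces to uniqueness.

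First I would prove uniqueness. Setting $L_h\equiv 0$ and choosing the test triple to be the componentwise conjugate of the solution, the identity underlying the proof of Proposition~\ref{discElipBEM} gives
\[
0 = \omega\,\norm{\mu^{1/2}\bu_h}_{0,\Omega}^2 + \norm{\sigma^{-1/2}\curl_h \bu_h}_{0,\Omega}^2 + \alpha\,\norm{\mathtt{s}_{\cF}^{-1/2} h_{\cF}^{-1/2}\jump{(\bu_h,\psi_h)}}_{0,\cF_h}^2 + \omega\mu_0\dual{\lambda_h, V\bar\lambda_h}_\Gamma + \omega\mu_0\dual{W\bar\psi_h, \psi_h}_\Gamma .
\]
Every summand is nonnegative, the last two by \eqref{eq-coercive-1} and \eqref{eq-coercive-2}, where \eqref{eq-coercive-2} may be used without its $\bigl|\int_\Gamma\varphi\bigr|^2$ correction because $\psi_h\in\Psi_h$ has zero mean. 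Hence each term vanishes: \eqref{eq-coercive-1} forces $\lambda_h=0$, \eqref{eq-coercive-2} forces $\psi_h=0$, and the first term forces $\bu_h=0$. This argument is insensitive to $\alpha$: at $\alpha=0$ the jump term simply drops out, but the diagonal $\rmL^2$ and duality terms still annihilate the three unknowns, so uniqueness --- and therefore, the system being square, existence --- holds for every $\alpha\ge 0$.

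For the error bound (with $\alpha>0$, so that $\beta^*>0$) I would fix an arbitrary discrete triple $((\bv,\varphi),\eta)$ and split the total error as $((\bH,\psi),\lambda)-((\bu_h,\psi_h),\lambda_h)=\xi+\delta$, with approximation part $\xi:=((\bH,\psi),\lambda)-((\bv,\varphi),\eta)$ and discrete part $\delta:=((\bv,\varphi),\eta)-((\bu_h,\psi_h),\lambda_h)$. Subtracting \eqref{ldg-FemBem} from the consistency identity satisfied by the exact solution yields the orthogonality $\mathbb{A}_h(\xi+\delta,w_h)=0$ for every discrete $w_h$ (using additivity of $\mathbb{A}_h$ in the first slot); taking $w_h=\bar\delta$, which is admissible because $\mathbf{X}_h$, $\Psi_h$ and $\Lambda_h$ are stable under conjugation, gives $\mathbb{A}_h(\delta,\bar\delta)=-\mathbb{A}_h(\xi,\bar\delta)$. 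Proposition~\ref{discElipBEM} applied to $\delta$ and Proposition~\ref{boundednessBEM} applied to $\mathbb{A}_h(\xi,\bar\delta)$ --- legitimately, since $\xi$ occupies the first, continuous slot and $\bar\delta$ the discrete slot, and the harmless factor $\abs{1-\imath}$ is absorbed into $M^*$ --- then yield
\[
\beta^*\trinorm{\delta}^2 \le \mathrm{Re}\bigl[(1-\imath)\,\mathbb{A}_h(\delta,\bar\delta)\bigr] = -\,\mathrm{Re}\bigl[(1-\imath)\,\mathbb{A}_h(\xi,\bar\delta)\bigr] \le M^*\,\trinorm{\xi}_{\ast}\,\trinorm{\delta} ,
\]
whence $\trinorm{\delta}\le (M^*/\beta^*)\trinorm{\xi}_{\ast}$. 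The triangle inequality together with $\trinorm{\xi}\le\trinorm{\xi}_{\ast}$ then gives $\trinorm{\xi+\delta}\le(1+M^*/\beta^*)\trinorm{\xi}_{\ast}$, which is precisely the stated estimate.

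The steps are routine once the pieces are in place; the points needing care are bookkeeping rather than analysis. The main one is the combined form $\mathbb{A}_h$: one must verify that the two scalar equations of \eqref{ldg-FemBem} genuinely assemble --- through the conjugates in the second block --- into one relation to which Propositions~\ref{boundednessBEM} and \ref{discElipBEM} literally apply, and that $\mathbb{A}_h$ remains additive in each argument so that the orthogonality subtraction is valid. The only non-textbook wrinkle is the case $\alpha=0$ in the uniqueness argument, where $\beta^*$ degenerates and one cannot invoke a single positive coercivity constant; there one must read off the vanishing of each individual nonnegative term directly, as above. Everything else is the standard C\'ea--Strang manipulation.
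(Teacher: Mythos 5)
Your proof is correct and takes essentially the same route as the paper, which gives no written proof at all but simply states that the theorem is deduced ``readily'' from the consistency of the scheme, Proposition~\ref{boundednessBEM} and Proposition~\ref{discElipBEM} --- i.e.\ exactly the C\'ea-type argument (uniqueness via testing with the conjugate triple, Galerkin orthogonality, coercivity plus boundedness on the split error) that you carry out. Your explicit handling of the $\alpha=0$ uniqueness case, the conjugation bookkeeping making $\mathbb{A}_h$ encode both equations of \eqref{ldg-FemBem} while remaining additive, and the absorption of the harmless factor $\abs{1-\imath}=\sqrt{2}$ into $M^*$ merely fill in details the paper leaves implicit.
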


\section{Asymptotic error estimates}
We denote by $\bPi_{h}^{\text{curl}}$ the $m$-order $\mathbf{H}(\curl, \Omega)$-conforming N\'ed\'elec interpolation operator of the second kind, see for example \cite{NED86, Boffi} or \cite[Section 8.2]{Monk}.
It is well known that $\bPi_h^{\text{curl}}$ is bounded on 
$\mathbf{H}(\curl, \Omega)\cap \mathbf{H}^s(\curl, \cT_h)$ for $s>1/2$. Moreover, there exists a constant $C>0$ independent of $h$ such that (cf. \cite{Monk})
\begin{equation}\label{errorInterp1}
\norm{\bu - \bPi_{h}^{\text{curl}} \bu}_{\mathbf{H}(\curl, \Omega)}  \leq C h^{\min(s, m)} \big(  \norm{\bu}_{s, \cT_h} + \norm{\curl_h \bu}_{s, \cT_h} \big).
\end{equation}

For all triangle $T\in \cF_h^{\Gamma}$ we define the interpolation operator $\pi^\Gamma_{T}:\rmH^{1/2+s}(T)\to \cP_{m+1}(T)$, $s>1/2$, uniquely determined by the conditions
\begin{equation}\label{freedomV}
\pi^\Gamma_{T} \varphi (\mathbf{a}_T) = \varphi(\mathbf{a}_T),\quad \text{for all vertex $\mathbf{a}_T$ of $T$},
\end{equation}
\begin{equation}\label{freedomEb2d}
\int_e \pi^\Gamma_{T} \varphi q = \int_e \varphi q \qquad \forall q \in \cP_{m-1}(e), \quad 
\text{for all edge $e$ of $T$},
\end{equation}
\begin{equation}\label{freedomTb2d}
\int_T \pi^\Gamma_{T} \varphi q = \int_T \varphi q \qquad \forall q \in \cP_{m-2}(T).
\end{equation}
The corresponding global interpolation operator $\pi^\Gamma_{h}$ is $\rmH^1(\Gamma)$-conforming 
and satisfies the following interpolation error estimate. 
\begin{lemma}\label{varphiHmedio}
Assume that $\varphi\in H^{1/2+s}(\cF_h^\Gamma)\cap H^{1/2}(\Gamma)$ with $s>1/2$, then
 \begin{equation}\label{interpG}
  \norm{\varphi - \pi^\Gamma_{h} \varphi}_{t,\Gamma} \leq C h^{\min\{1/2+s,m+2\}-t} 
  \norm{\varphi}_{1/2+s,\cF_h^\Gamma},\quad t\in \set{0,1,1/2}
 \end{equation}
 with a constant $C>0$ independent of $h$.
\end{lemma}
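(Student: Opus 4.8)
The plan is to establish the interpolation estimate \eqref{interpG} by the standard Bramble--Hilbert / reference-element argument, combined with the observation that the operator $\pi^\Gamma_h$ reproduces polynomials of degree $m+1$. First I would verify that the local operator $\pi^\Gamma_T$ is well defined: the degrees of freedom \eqref{freedomV}--\eqref{freedomTb2d} count the vertices, the edge moments against $\cP_{m-1}$, and the interior moments against $\cP_{m-2}$, which together must match $\dim \cP_{m+1}(T)$; this is the classical unisolvence of the Lagrange-type element of degree $m+1$ on a triangle, so $\pi^\Gamma_T$ is uniquely defined and, crucially, $\pi^\Gamma_T q = q$ for all $q\in\cP_{m+1}(T)$. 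The global matching of the vertex values and of the $\cP_{m-1}$ edge moments on shared edges guarantees $\rmH^1(\Gamma)$-conformity (continuity across edges), as asserted.

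Next I would pass to the reference triangle $\widehat T$. Since $s>1/2$, the embedding $\rmH^{1/2+s}(\widehat T)\hookrightarrow \mathcal C^0(\widehat T)$ holds, so the vertex evaluations in \eqref{freedomV} are bounded linear functionals on $\rmH^{1/2+s}(\widehat T)$, and the edge moments \eqref{freedomEb2d} make sense via the trace $\rmH^{1/2+s}(\widehat T)\to \rmH^{s}(\widehat e)\hookrightarrow \rmL^2(\widehat e)$; hence $\widehat\pi := \pi^\Gamma_{\widehat T}$ is a bounded operator from $\rmH^{1/2+s}(\widehat T)$ into $\cP_{m+1}(\widehat T)$. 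I would then invoke the Bramble--Hilbert lemma: since $\widehat\pi$ fixes $\cP_{m+1}$ and $I-\widehat\pi$ annihilates $\cP_{\min\{1/2+s,m+2\}-1}$, one gets
\[
\norm{\widehat\varphi - \widehat\pi\,\widehat\varphi}_{t,\widehat T} \leq C\,\abs{\widehat\varphi}_{\min\{1/2+s,m+2\},\widehat T}
\]
for $t\in\set{0,1/2,1}$, with the seminorm on the right interpreted as the fractional $\rmH^{1/2+s}$ seminorm when $1/2+s\le m+2$. Scaling back to $T$ by the affine (up to a curved-surface correction) map, tracking the Jacobian powers $h_T^{d/2}$ for the $\rmL^2$-type norms and the extra $h_T^{-t}$ for the order-$t$ derivatives, produces the local bound $\norm{\varphi-\pi^\Gamma_T\varphi}_{t,T}\le C h_T^{\min\{1/2+s,m+2\}-t}\norm{\varphi}_{1/2+s,T}$. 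Summing the squares over $T\in\cF_h^\Gamma$ and using $h_T\le h$ gives \eqref{interpG} in the broken norm on the right-hand side.

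The main obstacle I anticipate is twofold. First, the fractional-order scaling: because the degrees of freedom involve traces and point values, the constant in the Bramble--Hilbert step and in the scaling must be shown to be uniform over the shape-regular family, which requires the standard but delicate argument that fractional seminorms scale with a clean power of $h_T$ under affine maps; on a genuinely curved Lipschitz surface $\Gamma$ one works through local parametrizations, and the invariance of the fractional spaces under Lipschitz charts (already recalled in the Introduction) is what legitimizes the intrinsic $\rmH^{1/2+s}(\Gamma)$ and $\rmH^t(\Gamma)$ norms appearing in the estimate. Second, the case $t=1/2$ is not a plain local estimate: the $\rmH^{1/2}(\Gamma)$ norm is nonlocal, so summing local $\rmH^{1/2}(T)$ bounds does not directly control $\norm{\varphi-\pi^\Gamma_h\varphi}_{1/2,\Gamma}$. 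I would handle this by the familiar interpolation-space argument: obtain the estimates for $t=0$ and $t=1$ globally (the latter using the $\rmH^1(\Gamma)$-conformity so that the broken $\rmH^1$ seminorm equals the global one), and then interpolate between $\rmL^2(\Gamma)$ and $\rmH^1(\Gamma)$ to recover the $t=1/2$ bound, since $\min\{1/2+s,m+2\}-1/2$ lies between the exponents for $t=0$ and $t=1$.
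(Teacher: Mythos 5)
Your proposal is correct and follows essentially the same route as the paper: well-definedness of $\pi^\Gamma_h$ via the embedding $\rmH^{1/2+s}(\cF_h^\Gamma)\cap \rmH^{1/2}(\Gamma)\subset \mathcal C^0(\Gamma)$, standard (Bramble--Hilbert/scaling) local estimates for $t=0$ and $t=1$, and then the $t=1/2$ case by space interpolation between $\rmL^2(\Gamma)$ and $\rmH^1(\Gamma)$, which is precisely the paper's use of the inequality $\norm{\phi}_{1/2,\Gamma}^2 \leq \norm{\phi}_{0,\Gamma}\norm{\phi}_{1,\Gamma}$. Your explicit flagging that the nonlocal $\rmH^{1/2}(\Gamma)$ norm cannot be obtained by summing local bounds, and the resulting fallback to the global $t=0$ and $t=1$ estimates, is exactly the point the paper's terse proof is built on.
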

\begin{proof}
We notice that, as $s>1/2$, $\rmH^{1/2+s}(\cF_h^\Gamma)\cap \rmH^{1/2}(\Gamma)\subset \mathcal{C}^0(\Gamma)$. Hence, 
$\pi_h^\Gamma$ is bounded on $\rmH^{s+1/2}(\cF_h^\Gamma)\cap \rmH^{1/2}(\Gamma)$.
The interpolation error estimates for $t=0$ and $t=1$ are standard. The case $t=1/2$ is obtained from the interpolation 
inequality
\[
\norm{\phi}_{1/2, \Gamma}^2 \leq \norm{\phi}_{0, \Gamma} \norm{\phi}_{1, \Gamma} \quad \forall \phi \in \rmH^1(\Gamma).
\] 
\end{proof} 

We introduce 
$
\mathbf{L}^2_t(\Gamma) = \set{\boldsymbol{\varphi}\in \rmL^2(\Gamma)^3;\,\,  \boldsymbol{\varphi}\cdot \n = 0}
$
and consider the $m$-order order Brezzi-Douglas-Marini (BDM) (see \cite{Boffi, Monk}) finite element approximation of  
\[
\mathbf{H}(\text{div}_\Gamma, \Gamma) := \set{ \boldsymbol{\varphi}\in \mathbf{L}_t^2(\Gamma); \quad 
\text{div}_\Gamma \boldsymbol{\varphi}\in L^2(\Gamma) }
\]
relatively to the mesh $\cF_h^\Gamma$, where $\div_\Gamma$ is the divergence operator on the surface $\Gamma$. 
It is given by 
\[
\mathrm{BDM}(\cF_h^\Gamma) = \set{\boldsymbol{\varphi}\in  \mathbf{H}(\text{div}_\Gamma, \Gamma); \quad 
\boldsymbol{\varphi}|_T \in \cP_m(T)^2,\quad \forall T\in \cF_h^\Gamma}.
\]

The corresponding interpolation operator $\Pi_h^{\text{BDM}}$ is bounded on $
\mathbf{H}(\text{div}_\Gamma, \Gamma)\cap \prod_{T\in \cF_h^\Gamma} \rmH^\delta(T)^2$ for all $\delta>0$, and it is not difficult to check that  is 
related to $\Pi_h^{\text{curl}}$ through the following commuting diagram property:
\begin{equation}\label{comm1}
(\bPi_{h}^{\text{curl}} \bv) \times \n = \Pi_{h}^{\text{BDM}} ( \bv \times \n )
\quad \forall \bv \in \mathbf{H}(\curl, \Omega)\cap \mathbf{H}^s(\curl, \cT_h), \quad s>1/2.
\end{equation} 

Moreover the following result holds true.

\begin{prop}
Let $((\bH, \psi), \lambda)\in [\mathbf{H}(\curl,\Omega)\times \rmH_0^{1/2}(\Gamma)]\times \rmH_0^{-1/2}(\Gamma)$ is the solution of \eqref{ModelProblemFemBEM1}-\eqref{ModelProblemFemBEM5}. Assume that 
$(\bH, \psi)\in \mathbf{H}^s(\curl, \cT_h)\times H^{1/2+s}(\cF_h^\Gamma)$ with $s>1/2$. Then,
\begin{equation}\label{comm0}
(\bPi_{h}^{\mathrm{curl}} \bH) \times \n = \curl_\Gamma (\pi^\Gamma_{h} \psi) \quad  \text{on $\Gamma$}.
\end{equation}
\end{prop}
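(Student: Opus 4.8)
\emph{Proof plan.} The plan is to reduce the identity to a purely two-dimensional commuting diagram on $\Gamma$ and then verify it through the degrees of freedom defining $\Pi_h^{\mathrm{BDM}}$. First I would invoke the commuting property \eqref{comm1} with $\bv=\bH$ (legitimate since $\bH\in\mathbf{H}(\curl,\Omega)\cap\mathbf{H}^s(\curl,\cT_h)$, $s>1/2$), giving $(\bPi_h^{\mathrm{curl}}\bH)\times\n=\Pi_h^{\mathrm{BDM}}(\bH\times\n)$, and then substitute the transmission condition \eqref{ModelProblemFemBEM3}, namely $\bH\times\n=\curl_\Gamma\psi$, to obtain $(\bPi_h^{\mathrm{curl}}\bH)\times\n=\Pi_h^{\mathrm{BDM}}(\curl_\Gamma\psi)$. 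It therefore suffices to prove
\[
\Pi_h^{\mathrm{BDM}}(\curl_\Gamma\psi)=\curl_\Gamma(\pi_h^\Gamma\psi)\quad\text{on }\Gamma .
\]
Both sides are well defined and belong to $\mathrm{BDM}(\cF_h^\Gamma)$: the right-hand side because $\pi_h^\Gamma\psi\in\Psi_h$ is piecewise in $\cP_{m+1}$ and $\curl_\Gamma$ lowers the polynomial degree by one, and the left-hand side because $\curl_\Gamma\psi$ is surface-divergence free (hence in $\mathbf{H}(\mathrm{div}_\Gamma,\Gamma)$) and piecewise in $\rmH^{s-1/2}(T)^2$ with $s-1/2>0$, so it lies in the domain of $\Pi_h^{\mathrm{BDM}}$.

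Since $\Pi_h^{\mathrm{BDM}}$ is unisolvent, I would establish the identity by checking that the two members share the same BDM degrees of freedom on every $T\in\cF_h^\Gamma$. For an edge $e\subset\partial T$ let $\n_e$ be the unit co-normal (tangent to $\Gamma$, normal to $e$) and $\tg_e$ the unit tangent of $e$, so that $\n\times\n_e=\pm\tg_e$. The edge functionals of $\mathrm{BDM}_m$ read $\int_e(\boldsymbol{\varphi}\cdot\n_e)\,q$ with $q\in\cP_m(e)$, and using $(\curl_\Gamma\psi)\cdot\n_e=(\nabla_\Gamma\psi\times\n)\cdot\n_e=\pm\,\partial_{\tg_e}\psi$ together with integration by parts along $e$ one gets
\[
\int_e(\curl_\Gamma\psi\cdot\n_e)\,q=\pm\Big([\psi q]_{\partial e}-\int_e\psi\,\partial_{\tg_e}q\Big).
\]
Because $\partial_{\tg_e}q\in\cP_{m-1}(e)$, this functional depends on $\psi$ only through its vertex values and its moments against $\cP_{m-1}(e)$, which are exactly the data fixed by \eqref{freedomV} and \eqref{freedomEb2d}; hence the edge functionals of $\curl_\Gamma\psi$ and of $\curl_\Gamma(\pi_h^\Gamma\psi)$ agree.

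For the interior functionals $\int_T\boldsymbol{\varphi}\cdot\mathbf{p}$, with $\mathbf{p}$ ranging over the interior test space of $\mathrm{BDM}_m$ (tangent polynomial fields of degree at most $m-1$), I would apply the surface Green formula
\[
\int_T(\curl_\Gamma\psi)\cdot\mathbf{p}=\int_T\psi\,\mathrm{rot}_\Gamma\mathbf{p}+\int_{\partial T}\psi\,(\mathbf{p}\cdot\tg_{\partial T}),
\]
where $\mathrm{rot}_\Gamma$ is the scalar surface rotation. Here $\mathrm{rot}_\Gamma\mathbf{p}\in\cP_{m-2}(T)$, so the interior contribution is controlled by the moments \eqref{freedomTb2d}, while on each edge $\mathbf{p}\cdot\tg_{\partial T}\in\cP_{m-1}(e)$, so the boundary contribution is controlled by \eqref{freedomEb2d}. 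All of these functionals of $\psi$ coincide with those of $\pi_h^\Gamma\psi$ by the very definition of $\pi_h^\Gamma$. Thus every BDM degree of freedom of $\curl_\Gamma\psi$ equals that of $\curl_\Gamma(\pi_h^\Gamma\psi)$, and unisolvence yields \eqref{comm0}.

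I expect the main obstacle to be the interior functionals: one must pin down the precise interior test space of $\mathrm{BDM}_m$ and verify, with the correct orientation and sign conventions for $\curl_\Gamma$ and $\mathrm{rot}_\Gamma$, that the Green identity produces exactly the moments fixed by \eqref{freedomEb2d}--\eqref{freedomTb2d} and nothing outside the range of $\pi_h^\Gamma$. Equivalently, this step is the standard commuting diagram $\Pi_h^{\mathrm{BDM}}\,\curl_\Gamma=\curl_\Gamma\,\pi_h^\Gamma$ for the two-dimensional Lagrange--BDM finite element de Rham complex on $\Gamma$, which could alternatively be cited from \cite{Boffi,Monk}.
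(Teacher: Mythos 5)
Your proposal is correct and takes essentially the same route as the paper: it reduces \eqref{comm0}, via the commuting property \eqref{comm1} and the transmission condition \eqref{ModelProblemFemBEM3}, to the two-dimensional identity $\Pi_h^{\mathrm{BDM}}(\curl_\Gamma \psi) = \curl_\Gamma(\pi_h^\Gamma \psi)$, which is then established by matching the $\mathrm{BDM}$ degrees of freedom on each boundary triangle. In fact, you carry out the edge and interior integration-by-parts verification that the paper dispatches with ``it can be shown,'' and your caveat about the precise interior test space is harmless, since every standard choice is contained in $\cP_{m-1}(T)^2$, which is all your Green-formula argument requires.
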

\begin{proof}
Let us first prove that 
\begin{equation}\label{comm2}
\Pi_{h}^{\text{BDM}} (\curl_\Gamma \psi )
=\curl_\Gamma ( \pi^\Gamma_{h} \psi).
\end{equation}
It is clear that $\curl_\Gamma  \pi^\Gamma_{h} \psi\in \mathrm{BDM}(\cF_h^\Gamma)$ and it can be shown that 
the tangential fields $\Pi_{h}^{\text{BDM}} (\curl_\Gamma \psi )$ and $\curl_\Gamma  \pi^\Gamma_{h} \psi$ have the same 
\textrm{BDM}-degrees of freedom in each $F\in \cF_h^\Gamma$, 
which gives \eqref{comm2}. We deduce now \eqref{comm0} from \eqref{comm1}, \eqref{comm2} and the transmission condition 
\eqref{ModelProblemFemBEM3}.
\end{proof}

We will also use the best $L^2(\Gamma)$ approximation in $\Lambda_h$ of a function $\eta \in H^r(\cF_h^\Gamma)$, with $r>0$,  the $\mathbf{L}^2(\cT_h)$-orthogonal projection onto $\prod_{K\in \cT_h} \cP_{m-1}(K)^3$ of a function ${\bf w}\in {\bf H}^r({\mathcal T}_h)$, with $r>1/2$, and the following estimates.

\begin{lemma}
Assume that $\eta\in H^r(\cF_h^\Gamma)$ for some $r \geq 0$. 
Then,
\begin{equation}\label{sigman}
 \norm{\eta - \pi_{\Lambda_h}\eta}_{-1/2,\Gamma}\leq C h^{\min\{ r,m \}+1/2} \norm{\eta}_{r,\cF_h^\Gamma},
\end{equation}
where $\pi_{\Lambda_h}\eta$ the best $L^2(\Gamma)$ approximation of $\eta$ in $\Lambda_h$. 
\end{lemma}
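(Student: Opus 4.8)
The plan is to obtain the negative-order estimate by duality, reducing the constrained best approximation to an \emph{unconstrained} element-wise $\rmL^2$-projection for which full orthogonality is available. Let $P_h:\rmL^2(\Gamma)\to V_h$ be the $\rmL^2(\Gamma)$-orthogonal projection onto $V_h:=\prod_{T\in\cF_h^\Gamma}\cP_{m-1}(T)$, computed face by face. Since $m\geq 1$, the constants belong to $V_h$ and are $\rmL^2(\Gamma)$-orthogonal to $\Lambda_h$, so that $V_h=\Lambda_h\oplus\mathrm{span}\set{1}$ orthogonally. In the situation in which the lemma is applied, $\eta$ stands for $\lambda\in\rmH_0^{-1/2}(\Gamma)$ and hence has vanishing mean; then $P_h\eta$ inherits the zero mean, because $\int_\Gamma P_h\eta=\dual{\eta,P_h 1}_\Gamma=\int_\Gamma\eta=0$, and therefore $\pi_{\Lambda_h}\eta=P_h\eta$. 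The payoff of this identification is that the error $\eta-P_h\eta$ becomes orthogonal to \emph{all} of $V_h$, not merely to $\Lambda_h$.

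Next I would invoke the duality characterization $\norm{\eta-P_h\eta}_{-1/2,\Gamma}=\sup_{\phi}\dual{\eta-P_h\eta,\phi}_\Gamma/\norm{\phi}_{1/2,\Gamma}$, the supremum being over $0\neq\phi\in\rmH^{1/2}(\Gamma)$. For each such $\phi$, the $V_h$-orthogonality gives $\dual{\eta-P_h\eta,\phi}_\Gamma=\dual{\eta-P_h\eta,\phi-P_h\phi}_\Gamma$, whence by Cauchy--Schwarz $\dual{\eta-P_h\eta,\phi}_\Gamma\leq\norm{\eta-P_h\eta}_{0,\Gamma}\,\norm{\phi-P_h\phi}_{0,\Gamma}$. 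It then only remains to insert two $\rmL^2$ approximation estimates for $P_h$ and to divide by $\norm{\phi}_{1/2,\Gamma}$.

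For the factor carrying $\eta$, the standard local approximation property of the face-wise $\rmL^2$-projection onto $\cP_{m-1}$ yields $\norm{\eta-P_h\eta}_{0,\Gamma}\leq C h^{\min\set{r,m}}\norm{\eta}_{r,\cF_h^\Gamma}$, the order being $m$ since $\cP_{m-1}$ reproduces polynomials of degree $m-1$. For the dual factor, because $m\geq 1$ the projection reproduces constants, so a Bramble--Hilbert/scaling argument on each face gives both $\norm{\phi-P_h\phi}_{0,\Gamma}\leq C\norm{\phi}_{0,\Gamma}$ and $\norm{\phi-P_h\phi}_{0,\Gamma}\leq C h\,\norm{\phi}_{1,\Gamma}$; interpolating between these via the inequality $\norm{\phi}_{1/2,\Gamma}^2\leq\norm{\phi}_{0,\Gamma}\norm{\phi}_{1,\Gamma}$, exactly as in Lemma~\ref{varphiHmedio}, produces $\norm{\phi-P_h\phi}_{0,\Gamma}\leq C h^{1/2}\norm{\phi}_{1/2,\Gamma}$. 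Multiplying the two bounds and taking the supremum delivers the claimed factor $h^{\min\set{r,m}+1/2}$.

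The main obstacle is precisely the fractional dual estimate $\norm{\phi-P_h\phi}_{0,\Gamma}\leq C h^{1/2}\norm{\phi}_{1/2,\Gamma}$, since $\rmH^{1/2}(\Gamma)$ is a non-integer space defined by localization on the Lipschitz surface; the interpolation route above is the cleanest way to avoid working directly in that space and to capture the extra half power of $h$. A second, easily overlooked point is that this extra half power genuinely rests on $\int_\Gamma\eta=0$: for a $\eta$ with nonzero mean the error $\eta-\pi_{\Lambda_h}\eta$ retains an $O(1)$ constant component and the rate deteriorates, which is exactly why the reduction $\pi_{\Lambda_h}\eta=P_h\eta$ carried out in the first step is the linchpin of the argument.
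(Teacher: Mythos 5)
Your overall strategy is sound, and it is in essence the proof of the result the paper actually invokes: the paper does not prove this lemma itself, it simply cites \cite[Theorem 4.3.20]{sauterSchwab}, and your duality argument is the standard route to that estimate. The reduction $\pi_{\Lambda_h}\eta=P_h\eta$ under the zero-mean condition, the duality characterization of $\norm{\cdot}_{-1/2,\Gamma}$, the orthogonality step $\dual{\eta-P_h\eta,\phi}_\Gamma=\dual{\eta-P_h\eta,\phi-P_h\phi}_\Gamma$, and the bound $\norm{\eta-P_h\eta}_{0,\Gamma}\leq Ch^{\min\set{r,m}}\norm{\eta}_{r,\cF_h^\Gamma}$ are all correct. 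Your observation that the rate genuinely requires $\int_\Gamma\eta=0$ is also a valuable clarification: as literally stated the lemma fails for $\eta\equiv 1$ (then $\pi_{\Lambda_h}\eta=0$ and the left-hand side is a fixed positive constant), and it is only in the application, where $\eta=\lambda\in\rmH^{-1/2}_0(\Gamma)$, that the hypothesis is met.

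There is, however, a genuine gap in your justification of the dual-factor estimate $\norm{\phi-P_h\phi}_{0,\Gamma}\leq Ch^{1/2}\norm{\phi}_{1/2,\Gamma}$ --- the very step you single out as the main obstacle. The inequality $\norm{\phi}_{1/2,\Gamma}^2\leq\norm{\phi}_{0,\Gamma}\norm{\phi}_{1,\Gamma}$ goes the wrong way for this purpose. In Lemma~\ref{varphiHmedio} it is applied to the error function itself, where one wants an \emph{upper} bound on an $\rmH^{1/2}$ norm, and the inequality supplies exactly that. Here you want to bound an $\rmL^2$ quantity by the $\rmH^{1/2}$ norm of the datum: multiplying your two endpoint bounds gives $\norm{\phi-P_h\phi}_{0,\Gamma}^2\leq Ch\,\norm{\phi}_{0,\Gamma}\norm{\phi}_{1,\Gamma}$, and to conclude you would need $\norm{\phi}_{0,\Gamma}\norm{\phi}_{1,\Gamma}\leq C\norm{\phi}_{1/2,\Gamma}^2$, i.e.\ the \emph{reverse} of the stated inequality, which is false in general (superpose a low-frequency and a high-frequency component of suitable amplitudes; the ratio is unbounded). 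The step is true but needs a different tool: either (i) operator interpolation --- $I-P_h$ has norm $\leq 1$ from $\rmL^2(\Gamma)$ to $\rmL^2(\Gamma)$ and norm $\leq Ch$ from $\rmH^1(\Gamma)$ to $\rmL^2(\Gamma)$, and since $\rmH^{1/2}(\Gamma)$ is the interpolation space halfway between $\rmL^2(\Gamma)$ and $\rmH^1(\Gamma)$ on a Lipschitz surface, the operator norm from $\rmH^{1/2}(\Gamma)$ to $\rmL^2(\Gamma)$ is $\leq Ch^{1/2}$ --- or (ii) a direct fractional Bramble--Hilbert argument: on each face $T$, since $\cP_{m-1}(T)$ contains constants, $\norm{\phi-P_h\phi}_{0,T}\leq\norm{\phi-\bar\phi_T}_{0,T}\leq Ch_T^{1/2}\abs{\phi}_{1/2,T}$ by scaling of the Slobodeckij seminorm, and then $\sum_{T}\abs{\phi}_{1/2,T}^2\leq\abs{\phi}_{1/2,\Gamma}^2$ because the face-wise double integrals run over a subset of $\Gamma\times\Gamma$. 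Either argument closes the gap; the elementary interpolation inequality alone does not.
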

\begin{proof}
 See \cite[Theorem 4.3.20]{sauterSchwab}.
\end{proof}

\begin{lemma}\label{v}
Let ${\bf P}_h^{m-1}$ be the $\mathbf{L}^2(\cT_h)$-orthogonal projection onto $\prod_{K\in \cT_h} \cP_{m-1}(K)^3$. For all $K\in \cT_h$ and  $\mathbf{w}\in \mathbf{H}^{r}(K)$, $r> 1/2$, we have
 \begin{equation}\label{proj}
   h_K^{1/2}\norm{\mathbf{w} - {\bf P}^{m-1}_h \mathbf{w}}_{0,\partial K}  + \norm{\mathbf{w} - {\bf P}^{m-1}_h \mathbf{w}}_{0,K} 
  \leq C h_K^{\min\{r,m\}} \norm{\mathbf{w}}_{r,K},
 \end{equation}
 with a constant $C>0$ independent of $h$. 
\end{lemma}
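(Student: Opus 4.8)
The plan is to argue element by element: since $\mathbf{P}_h^{m-1}$ acts as the local $\rmL^2(K)$-orthogonal projection onto $\cP_{m-1}(K)^3$, it suffices to prove both bounds on a fixed $K$, with a constant depending only on the shape-regularity constant. I would first dispose of the volume term $\norm{\mathbf{w}-\mathbf{P}_h^{m-1}\mathbf{w}}_{0,K}$: because the projection is the best $\rmL^2$ approximation in $\cP_{m-1}(K)^3$, one has $\norm{\mathbf{w}-\mathbf{P}_h^{m-1}\mathbf{w}}_{0,K}\le\inf_{q\in\cP_{m-1}(K)^3}\norm{\mathbf{w}-q}_{0,K}$, and the Deny--Lions / Bramble--Hilbert lemma together with the usual scaling to the reference element gives the bound $Ch_K^{\min\{r,m\}}\norm{\mathbf{w}}_{r,K}$, the exponent saturating at $m$ because we approximate with polynomials of degree $m-1$.

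For the trace term the discrete trace inequality \eqref{discreteTrace3D} cannot be used directly, since $\mathbf{w}-\mathbf{P}_h^{m-1}\mathbf{w}$ is not a polynomial. Instead, I exploit that $\mathbf{P}_h^{m-1}$ reproduces $\cP_{m-1}(K)^3$, so for every $q\in\cP_{m-1}(K)^3$ one may write $e:=\mathbf{w}-\mathbf{P}_h^{m-1}\mathbf{w}=(I-\mathbf{P}_h^{m-1})(\mathbf{w}-q)$. Mapping $K$ to the reference simplex $\widehat K$ by the affine shape-regular transformation, and using that the local $\rmL^2$ projection commutes with the pullback (the Jacobian of the map is constant and $\cP_{m-1}$ is affine-invariant), I apply the continuous trace theorem $\norm{\widehat e}_{0,\partial\widehat K}\le C\norm{\widehat e}_{r,\widehat K}$, valid for $r>1/2$. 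The operator $I-\widehat{\mathbf{P}}$ is bounded on $\rmH^r(\widehat K)$ --- its range is finite dimensional, so an inverse estimate and $\rmL^2$-stability yield $\norm{\widehat e}_{r,\widehat K}\le C\norm{\widehat{\mathbf{w}}-\widehat q}_{r,\widehat K}$.

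It remains to scale back and to choose $q$. Writing $g:=\mathbf{w}-q$, the surface and volume measures scale like $h_K^{2}$ and $h_K^{3}$ and the seminorms like $|\widehat g|_{j,\widehat K}^2\simeq h_K^{2j-3}|g|_{j,K}^{2}$, which turns the reference estimate into $\norm{e}_{0,\partial K}^{2}\le C\sum_{0\le j\le r}h_K^{2j-1}|g|_{j,K}^{2}$, hence $h_K^{1/2}\norm{e}_{0,\partial K}\le C\big(\sum_{0\le j\le r}h_K^{2j}|g|_{j,K}^{2}\big)^{1/2}$. Now I take $q$ to be the averaged Taylor (Dupont--Scott) polynomial of degree $m-1$, which satisfies the simultaneous estimates $|\mathbf{w}-q|_{j,K}\le Ch_K^{\min\{r,m\}-j}\norm{\mathbf{w}}_{r,K}$ for all $0\le j\le\min\{r,m\}$ (and $|\mathbf{w}-q|_{j,K}=|\mathbf{w}|_{j,K}$ for $j\ge m$, where $h_K^{2j}\le Ch_K^{2m}$ since $h_K$ is bounded). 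Each summand then contributes the same power $h_K^{2\min\{r,m\}}\norm{\mathbf{w}}_{r,K}^{2}$, and since there are only finitely many indices $j$, summation gives $h_K^{1/2}\norm{e}_{0,\partial K}\le Ch_K^{\min\{r,m\}}\norm{\mathbf{w}}_{r,K}$, which combined with the volume estimate completes the proof.

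The main obstacle is the trace term for non-integer $r\in(1/2,1)$: there the naive $\rmH^1$ trace inequality is unavailable, so one must pass through the fractional trace theorem on the reference element, and the delicate point is to arrange the scaling so that the $\min\{r,m\}$ rate is not lost. This is precisely what the single Dupont--Scott polynomial achieves, since it controls all the lower-order seminorms of $\mathbf{w}-q$ at rates that exactly compensate the $h_K^{2j}$ weights produced by the trace scaling.
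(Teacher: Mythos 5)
Your proposal is correct, but it is necessarily a different route from the paper's, because the paper does not prove this lemma at all: its ``proof'' is the single citation to Lemmas~1.58 and 1.59 of Di~Pietro--Ern. Your argument is essentially the standard proof underlying those results --- best approximation plus Deny--Lions/Bramble--Hilbert for the volume term, and for the trace term the identity $\mathbf{w}-\mathbf{P}_h^{m-1}\mathbf{w}=(I-\mathbf{P}_h^{m-1})(\mathbf{w}-q)$, commutation of the local $\rmL^2$ projection with the affine pullback, the trace theorem $\rmH^r(\widehat K)\to \rmL^2(\partial \widehat K)$ for $r>1/2$, stability of $I-\widehat{\mathbf{P}}$ on $\rmH^r(\widehat K)$ by norm equivalence on the finite-dimensional range of $\widehat{\mathbf{P}}$, and scaling. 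What your version buys is precisely the case the citation glosses over: the lemmas in Di~Pietro--Ern are stated for integer Sobolev indices, whereas the paper applies \eqref{proj} with fractional regularity $r=s$ and $r=s-1/2$, $s>1/2$; your passage through the fractional trace theorem on the reference element covers this regime directly, and is in fact the only way to handle $r\in(1/2,1)$, where the $\rmH^1$-based continuous trace inequality used in the integer-order proof is unavailable. Two small imprecisions you should fix in a final write-up: it is the range of $\widehat{\mathbf{P}}$ (not of $I-\widehat{\mathbf{P}}$) that is finite dimensional, which is what makes the inverse estimate $\norm{\widehat{\mathbf{P}}\widehat g}_{r,\widehat K}\leq C\norm{\widehat{\mathbf{P}}\widehat g}_{0,\widehat K}$ legitimate; and for non-integer $r$ the sum ``$\sum_{0\leq j\leq r}$'' must be read as the integer-order seminorms up to $\lfloor r\rfloor$ plus the Sobolev--Slobodeckij seminorm $|\cdot|_{r,K}$ as top term, with scaling $|\widehat g|_{r,\widehat K}^2\simeq h_K^{2r-3}|g|_{r,K}^2$ --- alternatively, one can avoid the simultaneous Dupont--Scott estimates altogether by applying Deny--Lions once in the $\rmH^{\min\{r,m\}}$ norm on the reference element and scaling only at the very end.
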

\begin{proof}
See \cite[Lemma 1.58 and Lemma 1.59]{DiPietroErn}.
\end{proof}

We are now ready to prove the main theorem of this section.
\begin{theorem}\label{conv}
Let $((\bH, \psi), \lambda)\in [\mathbf{H}(\curl,\Omega)\times H_0^{1/2}(\Gamma)]\times H_0^{-1/2}(\Gamma)$ be the solution of 
\eqref{ModelProblemFemBEM1}-\eqref{ModelProblemFemBEM5}
and $((\bH_h, \psi_h), \lambda_h)\in ((\mathbf{X}_h\times \Psi_h)\times \Lambda_h)$ the solutions of  \eqref{ldg-FemBem}. Assume that $\sigma^{-1} \bj_e\in \mathbf{H}^{s}(\cT_h)$  and that $(\bH,\psi)\in \mathbf{H}^s(\curl, \cT_h)\times \rmH^{s+1/2}(\cF_h^\Gamma)$ and $\lambda \in \rmH^{s-1/2}(\cF_h^\Gamma)$ with $s>1/2$. 
Then, there exists $C>0$ independent of $h$ such that 
\begin{multline*}
\trinorm{((\bH - \bH_h, \psi - \psi_h), \lambda - \lambda_h)} \leq C h^{\min(s, m)}
\Big(
\norm{\bH}_{s, \cT_h} + \norm{\curl\, \bH}_{s, \cT_h}
\\+
\norm{\psi}_{s+1/2,\cF_h^\Gamma}+
\norm{\lambda}_{s-1/2,\cF_h^\Gamma}
\Big).
\end{multline*}
\end{theorem}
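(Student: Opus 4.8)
The plan is to derive the estimate directly from the C\'ea bound of Theorem~\ref{ceaBem} by inserting well-chosen comparison functions and then invoking the interpolation estimates collected above. Concretely, I would take $\bv = \bPi_h^{\text{curl}}\bH$, $\varphi = \pi_h^\Gamma \psi$ and $\eta = \pi_{\Lambda_h}\lambda$ in Theorem~\ref{ceaBem}, so that the proof reduces to estimating, term by term, the quantity
\[
\trinorm{((\bH - \bPi_h^{\text{curl}}\bH,\, \psi - \pi_h^\Gamma \psi),\, \lambda - \pi_{\Lambda_h}\lambda)}_{\ast}.
\]

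First I would observe that, for precisely this choice of interpolants, the jump (penalty) contribution vanishes identically. On an interior face $F\in \cF_h^0$ both $\bH$ and $\bPi_h^{\text{curl}}\bH$ lie in $\mathbf{H}(\curl,\Omega)$, hence $\jump{(\bH - \bPi_h^{\text{curl}}\bH)\times \n}_F = 0$. On a boundary face $T\in\cF_h^\Gamma$ the jump equals $(\bH - \bPi_h^{\text{curl}}\bH)|_T\times \n - \curl_T(\psi - \pi_h^\Gamma\psi)$, which splits into $[\bH|_T\times\n - \curl_T\psi]$ and $-[(\bPi_h^{\text{curl}}\bH)|_T\times\n - \curl_T\pi_h^\Gamma\psi]$; the first bracket vanishes by the transmission condition \eqref{ModelProblemFemBEM3} and the second by the commuting identity \eqref{comm0}. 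Thus the entire penalty term in $\trinorm{\cdot}$ drops out.

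Next I would control the remaining genuine terms with the estimates already at hand. The volume contributions $\norm{(\omega\mu)^{1/2}(\bH - \bPi_h^{\text{curl}}\bH)}_{0,\Omega}$ and $\norm{\sigma^{-1/2}\curl_h(\bH - \bPi_h^{\text{curl}}\bH)}_{0,\Omega}$ are bounded by \eqref{errorInterp1}; the surface term $\omega\mu_0\norm{\psi - \pi_h^\Gamma\psi}_{1/2,\Gamma}^2$ by Lemma~\ref{varphiHmedio} with $t = 1/2$; and $\omega\mu_0\norm{\lambda - \pi_{\Lambda_h}\lambda}_{-1/2,\Gamma}^2$ by \eqref{sigman} applied with $r = s - 1/2$. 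A short inspection of the exponents (using $m\geq 1$ and $s>1/2$) confirms that each of these contributes a rate no slower than $h^{\min(s,m)}$.

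The main obstacle is the extra consistency term $\norm{\mathtt{s}_\cF^{1/2}h_\cF^{1/2}\mean{\sigma^{-1}\curl_h(\bH - \bPi_h^{\text{curl}}\bH)}}_{0,\cF_h}$ present in the $\ast$-norm, since $\curl(\bH - \bPi_h^{\text{curl}}\bH)$ is not piecewise polynomial, so the discrete trace inequality \eqref{discreteTrace3D} does not apply to it directly. To deal with it I would first use $\mathtt{s}_F\leq \sigma|_K$ to absorb the weights, reducing the quantity to $\sum_{K\in\cT_h} h_K\norm{\sigma_K^{-1/2}\curl(\bH - \bPi_h^{\text{curl}}\bH)}_{0,\partial K}^2$. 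Then I would insert the $\mathbf{L}^2(\cT_h)$-projection $\mathbf{P}_h^{m-1}(\curl\bH)$ and split into $\curl\bH - \mathbf{P}_h^{m-1}(\curl\bH)$, whose boundary trace is estimated by Lemma~\ref{v} with $r = s$, and the remainder $\mathbf{P}_h^{m-1}(\curl\bH) - \curl\bPi_h^{\text{curl}}\bH$, which is piecewise polynomial so that \eqref{discreteTrace3D} converts its boundary norm into an interior $\rmL^2$ norm; the latter is then bounded by adding and subtracting $\curl\bH$ and using Lemma~\ref{v} together with \eqref{errorInterp1}. Collecting all contributions and summing over $K$ yields the claimed $h^{\min(s,m)}$ rate.
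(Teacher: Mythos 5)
Your proposal is correct and follows essentially the same route as the paper: the same interpolants $(\bPi_h^{\text{curl}}\bH,\pi_h^\Gamma\psi,\pi_{\Lambda_h}\lambda)$ in the C\'ea bound, the same observation that the jumps vanish via conformity on interior faces and via \eqref{ModelProblemFemBEM3} together with \eqref{comm0} on boundary faces, and the same treatment of the consistency term by inserting $\mathbf{P}_h^{m-1}(\curl\,\bH)$, using the discrete trace inequality for the polynomial part and Lemma~\ref{v} for the rest. The only cosmetic difference is that the paper bounds the polynomial remainder through the $\mathbf{L}^2$-stability of $\mathbf{P}_h^{m-1}$ (via Lemma~\ref{equiv}) rather than by re-adding $\curl\,\bH$ and invoking \eqref{errorInterp1}; both give the same $h^{\min(s,m)}$ rate.
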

\begin{proof}
We deduce from Theorem \ref{ceaBem} that 
\[
\trinorm{((\bH - \bH_h, \psi - \psi_h), \lambda - \lambda_h)} \leq (1 + \frac{M^*}{\beta^*}) 
\trinorm{((\bH - \bPi_h^{\text{curl}}\bH, \psi - \pi^\Gamma_h\psi), \lambda - \pi_{\Lambda_h}\lambda)}_*.
\]
By virtue of \eqref{comm0}, 
we have
\begin{multline}\label{t1}
\trinorm{((\bH - \bPi_h^{\text{curl}}\bH, \psi - \pi^\Gamma_h\psi), \lambda - \pi_{\Lambda_h}\lambda)}^2_* = 
\norm{(\omega\mu)^{1/2}(\bH - \bPi_h^{\text{curl}}\bH)}^2_{0,\Omega}\\ + \norm{\sigma^{-1/2}\curl_h (\bH - \bPi_h^{\text{curl}}\bH)}^2_{0,\Omega} + \omega\mu_0\norm{\psi - \pi^\Gamma_h}^2_{1/2,\Gamma} + \omega\mu_0\norm{\lambda - \pi_{\Lambda_h}\lambda}^2_{-1/2,\Gamma}\\ + 
\norm{\mathtt{s}_{\cF}^{1/2} h_{\cF}^{1/2} \mean{\sigma^{-1}\curl_h (\bH - \bPi_h^{\text{curl}}\bH)}}^2_{0,\cF_h}.
\end{multline}  

We deduce from the triangle inequality that,
\begin{multline}\label{t2}
\norm{\mathtt{s}_{\cF}^{1/2} h_{\cF}^{1/2} \mean{\sigma^{-1}\curl (\bH - \bPi^{\text{curl}}_h\bH)}}_{0,\cF_h} = 
\norm{\mathtt{s}_{\cF}^{1/2} h_{\cF}^{1/2} \mean{\sigma^{-1}(I - {\bf P}^{m-1}_h)\curl \,\bH)}}_{0,\cF_h} \\+ 
\norm{\mathtt{s}_{\cF}^{1/2} h_{\cF}^{1/2} \mean{\sigma^{-1}({\bf P}^{m-1}_h\curl \,\bH - \curl\, \bPi^{\text{curl}}_h\bH)}}_{0,\cF_h} = A_\Omega + B_\Omega\, .
\end{multline}
Using \eqref{discIneq1} yields
\begin{multline}\label{t3}
B_\Omega \leq C_\Omega \norm{\sigma^{-1/2} ({\bf P}^{m-1}_h\curl\, \bH - \curl\, \bPi^{\text{curl}}_h\bH) }_{0,\Omega}\\ = 
C_\Omega \norm{\sigma^{-1/2} {\bf P}^{m-1}_h \curl( \bH -  \bPi^{\text{curl}}_h\bH) }_{0,\Omega} \leq 
C_\Omega \norm{\sigma^{-1/2}\curl ( \bH - \bPi^{\text{curl}}_h \bH) }_{0,\Omega},
\end{multline}
and it is straightforward that  
\begin{equation}\label{t4}
A_\Omega^2 \leq  \sum_{K\in \cT_h}  h_K 
\norm{ \sigma_K^{-1/2}(\curl\,\bH - {\bf P}^{m-1}_h\curl\, \bH) }^2_{0,\partial K}.
\end{equation}
Combining \eqref{t1}, \eqref{t2}, \eqref{t3} and \eqref{t4}, we deduce that
\begin{multline*}
\trinorm{((\bH - \bPi_h^{\text{curl}}\bH, \psi - \pi^\Gamma_h\psi), \lambda - \pi_{\Lambda_h}\lambda)}^2_* \leq  
\norm{(\omega\mu)^{1/2}(\bH - \bPi_h^{\text{curl}}\bH)}^2_{0,\Omega}\\ 
+ (1+C_\Omega^2)\norm{\sigma^{-1/2}\curl (\bH - \bPi_h^{\text{curl}}\bH)}^2_{0,\Omega} + \omega\mu_0\norm{\psi - \pi^\Gamma_h\psi}^2_{1/2,\Gamma}\\ + \omega\mu_0\norm{\lambda - \pi_{\Lambda_h}\lambda}^2_{-1/2,\Gamma} + 
 \sum_{K\in \cT_h}  h_K 
\norm{ \sigma_K^{-1/2}(\curl\,\bH - {\bf P}^{m-1}_h\curl\, \bH) }^2_{0,\partial K}.
\end{multline*} 
Finally, applying the interpolation error estimates \eqref{errorInterp1}, \eqref{proj},  \eqref{interpG} 
and \eqref{sigman} we obtain
\begin{multline*}
\trinorm{((\bH - \bPi_h^{\text{curl}}\bH, \psi - \pi^\Gamma_h\psi), \lambda - \pi_{\Lambda_h}\lambda)}_* \leq 
C \Big(
h^{\min(s, m)}  (
\norm{\bH}_{s, \cT_h} + \norm{\curl\, \bH}_{s, \cT_h})\\ + h^{\min\{s+1/2,m+2\}-1/2} 
  \norm{\psi}_{s+1/2,\cF_h^\Gamma} + h^{\min\{ s-1/2,m \}+1/2} \norm{\lambda}_{s-1/2,\cF_h^\Gamma}
\Big), 
\end{multline*}
and the result follows. 
\end{proof}

\begin{remark}
It is well-known that different choices of finite elements could be chosen for the approximation of $\mathbf{H}(\curl,\Omega)$ and $H^{1/2}(\Gamma)$.
For instance, let us consider, for $m\geq 1$, 
$
 \mathbf{X}^{(0)}_h := \prod_{K\in \cT_h}\mathbf{ND}_m(K)$ and
 \[
 \Psi^{(0)}_h := \set{\phi \in \mathcal C^0(\Gamma); \,\, \phi|_T \in \cP_{m}(T)\,\, \forall T\in \cF_h^\Gamma, \, \int_{\Gamma} \phi = 0},
\]
where $\mathbf{ND}_m(K)\subset \cP_m(K)^3$ is the $m^{\text{th}}$-order (local) N\'ed\'elec finite element space of the first kind, see for example \cite{NED86, Boffi}. The DG-FEM/BEM scheme \eqref{ldg-FemBem} formulated in terms of the finite spaces $(\mathbf{X}^{(0)}_h \times \Psi^{(0)}_h)\times \Lambda_h$  provides, under the regularity assumption of Theorem \ref{conv}, the same order of convergence with less degrees of freedom. However, in this case, the non-standard basis functions of $\mathbf{ND}_m(K)$ are required for the implementation of the scheme. 
 
\end{remark}

%


\begin{thebibliography}{10}

\bibitem{ABP09}
{\sc S.~Au{\ss}erhofer, O.~B{\'{\i}}r{\'o}, and K.~Preis}, {\em Discontinuous
  {G}alerkin formulation for eddy-current problems}, COMPEL, 28 (2009),
  pp.~1081--1090.

\bibitem{Boffi}
{\sc D.~Boffi, F.~Brezzi, and M.~Fortin}, {\em Mixed finite element methods and
  applications}, vol.~44 of Springer Series in Computational Mathematics,
  Springer, Heidelberg, 2013.

\bibitem{Bossavit91}
{\sc A.~Bossavit}, {\em The computation of eddy-currents, in dimension {$3$},
  by using mixed finite elements and boundary elements in association}, Math.
  Comput. Modelling, 15 (1991), pp.~33--42.
  
\bibitem{Carstensen}
  {\sc C.~Carstensen, R.H.W.~Hoppe, N.~Sharma, and T.~Warburton},
{\em Adaptive hybridized interior penalty discontinuous {G}alerkin
                methods for {H}(curl)-elliptic problems}, 
                Numer. Math. Theory Methods Appl., 4 (2011), pp.~13--37. 

\bibitem{Cockburn}
{\sc B.~Cockburn and F.-J.~Sayas},
{\em The devising of symmetric couplings of boundary element and discontinuous {G}alerkin methods},
 IMA J. Numer. Anal., 32 (2012), pp.~765--794. 
                                 
               
\bibitem{DiPietroErn}
{\sc D.~A. Di~Pietro and A.~Ern}, {\em Mathematical aspects of discontinuous
  {G}alerkin methods}, vol.~69 of Math\'ematiques \& Applications (Berlin)
  [Mathematics \& Applications], Springer, Heidelberg, 2012.

\bibitem{GHS10}
{\sc G.~N. Gatica, N.~Heuer, and F.-J. Sayas}, {\em A direct coupling of local
  discontinuous {G}alerkin and boundary element methods}, Math. Comp., 79
  (2010), pp.~1369--1394.

\bibitem{HMS16}
{\sc N.~Heuer, S.~Meddahi, and F.-J. Sayas}, {\em Symmetric coupling of
  {LDG}-{FEM} and {DG}-{BEM}}, J. Sci. Comput., 68 (2016), pp.~303--325.

\bibitem{Hiptmair}
{\sc R.~Hiptmair}, {\em Symmetric coupling for eddy current problems}, SIAM J.
  Numer. Anal., 40 (2002), pp.~41--65.
  

\bibitem{MS03}
{\sc S.~Meddahi and V.~Selgas}, {\em A mixed-{FEM} and {BEM} coupling for a
  three-dimensional eddy current problem}, M2AN Math. Model. Numer. Anal., 37
  (2003), pp.~291--318.

\bibitem{Monk}
{\sc P.~Monk}, {\em Finite {E}lement {M}ethods for {M}axwell's {E}quations},
  Oxford University Press, Oxford, 2003.

\bibitem{NED86}
{\sc J.-C. N{\'e}d{\'e}lec}, {\em A new family of mixed finite elements in
  {${\bf R}^3$}}, Numer. Math., 50 (1986), pp.~57--81.

\bibitem{PS03}
{\sc I.~Perugia and D.~Sch{\"o}tzau}, {\em The {$hp$}-local discontinuous
  {G}alerkin method for low-frequency time-harmonic {M}axwell equations}, Math.
  Comp., 72 (2003), pp.~1179--1214.

\bibitem{sauterSchwab}
{\sc S.~A. Sauter and C.~Schwab}, {\em Boundary element methods}, vol.~39 of
  Springer Series in Computational Mathematics, Springer-Verlag, Berlin, 2011.
  
\bibitem{Daveau}
{\sc A.~Zaghdani and C.~Daveau}
{\em On the coupling of {LDG}-{FEM} and {BEM} methods for the three
                dimensional magnetostatic problem},  Appl. Math. Comput., 217 (2010), pp.~1791--1810.

\end{thebibliography}
\end{document}